\documentclass[a4paper]{amsart}

\usepackage{amssymb}

\usepackage[alphabetic]{amsrefs}

\usepackage{a4wide,enumerate,graphicx,amsthm}

\usepackage{hyperref}

\usepackage{amscd}

\synctex=1

\sloppy

\setlength{\marginparwidth}{2cm}

\newtheorem{Satz}{Theorem}[section]
\newtheorem{Proposition}[Satz]{Proposition}
\newtheorem{Lemma}[Satz]{Lemma}
\newtheorem{Corollary}[Satz]{Corollary}

\newtheorem{Vermutung}[Satz]{Conjecture}

\theoremstyle{definition}
\newtheorem{Definition}[Satz]{Definition}
\newtheorem{Bemerkung}[Satz]{Remark}

\newtheorem{Beispiel}[Satz]{Example}

\newcommand{\Z}{\mathbb{Z}}
\newcommand{\R}{\mathbb{R}}
\newcommand{\Q}{\mathbb{Q}}
\newcommand{\p}{\partial}
\newcommand{\ti}{\widetilde}
\newcommand{\ov}{\overline}
\newcommand{\e}{\varepsilon}

\DeclareMathOperator{\vol}{vol}
\DeclareMathOperator{\sys}{sys}

\DeclareMathOperator{\dist}{dist}
\DeclareMathOperator{\wid}{width}
\DeclareMathOperator{\len}{length}
\DeclareMathOperator{\fil}{FillRad}

\begin{document}

\title{Macroscopic Band Width Inequalities}

\author{Daniel R\"ade}

\address{Universit\"at Augsburg, Institut f\"ur Mathematik, 86135 Augsburg, Germany}

\email{daniel.raede@math.uni-augsburg.de}

\begin{abstract}
Inspired by Gromov's work on \emph{Metric inequalities with scalar curvature} we establish band width inequalities for Riemannian bands of the form $(V=M\times[0,1],g)$, where $M^{n-1}$ is a closed manifold. We introduce a new class of orientable manifolds we call \emph{filling enlargeable} and prove:
If $M$ is filling enlargeable and all unit balls in the universal cover of $(V,g)$ have volume less than a constant $\frac{1}{2}\varepsilon_n$, then $\wid(V,g)\leq1$.
We show that if a closed orientable manifold is enlargeable or aspherical then it is filling enlargeable. Furthermore we establish that whether a closed orientable manifold is filling enlargeable or not only depends on the image of the fundamental class under the classifying map of the universal cover.
\end{abstract}

\keywords{Large manifolds, volumes of balls, systolic geometry, band width inequality}

\subjclass[2010]{53C23}

\maketitle

\section{Introduction}

This paper is concerned with a class of manifolds called \emph{bands}. While Gromov gives a very general definition in \cite{Gro18}*{Section 2}, the following is enough for our purposes:

\begin{Definition}
Let $M^{n-1}$ be a closed smooth manifold. We call $V=M\times[0,1]$ the \emph{band} over $M$. If $g$ is a smooth Riemannian metric on $V$, we call $(V,g)$ a \emph{Riemannian band} and define the \emph{width} of the band, denoted $\wid(V,g)$, to be the distance between $M\times\{0\}$ and $M\times\{1\}$ with respect to $g$.
\end{Definition}

In \cite{Gro18}*{Section 2} Gromov estimates the width of certain classes of Riemannian bands from above under the assumption that the scalar curvature $Sc(g)$ of the metric is bounded from below by a positive constant. He calls an orientable band $V^{n}$ \emph{over-torical} if it admits a continuous map to $T^{n-1}\times[0,1]$ with non zero degree, which maps $M\times\{0\}$ to $T^{n-1}\times\{0\}$ and $M\times\{1\}$ to $T^{n-1}\times\{1\}$. He  proves:

\begin{Satz}[Gromov] Let $V^n$ be an over-torical band. If $n\leq 8$ and $g$ is a Riemannian metric on $V$ with $Sc(g)\geq\sigma>0$, then 
\begin{equation}
\wid(V,g)\leq 2\pi\sqrt{\frac{n-1}{\sigma n}}.
\end{equation}\qed
\end{Satz}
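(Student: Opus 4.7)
The plan is to argue by contradiction using Gromov's $\mu$-bubble technique, combined with the Schoen--Yau obstruction to positive scalar curvature on closed manifolds that dominate a torus with non-zero degree.

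Assume $L := \wid(V,g) > L_0 := 2\pi\sqrt{(n-1)/(\sigma n)}$. The first step is to pick a smooth auxiliary function $h$ on $V$ that depends only on the signed distance from $M\times\{0\}$ and, along a subinterval of $[0,L]$ of length slightly greater than $L_0$, solves a Riccati-type ODE such as $h'(t) = -\tfrac{n}{2(n-1)} h(t)^2 - \tfrac{\sigma}{2}$, with $h\to+\infty$ and $h\to-\infty$ at the two endpoints. An elementary separation of variables shows that the maximal interval of definition for such an $h$ has length exactly $L_0$; the strict inequality $L>L_0$ is precisely what allows the blow-up interval to sit compactly inside the band.

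The second step is to minimize the $\mu$-bubble functional
\begin{equation*}
\mathcal{A}(\Omega) = \mathcal{H}^{n-1}(\p^*\Omega \cap \mathrm{int}(V)) - \int_\Omega h\, d\vol_g
\end{equation*}
over Caccioppoli sets $\Omega\subset V$ with $M\times\{0\}\subset\Omega$. The divergence of $h$ forces the reduced boundary away from $\p V$, and the regularity theory for prescribed-mean-curvature minimizers (smooth outside a singular set of codimension $7$, hence smooth everywhere precisely because $n-1\leq 7$) yields a smooth closed hypersurface $\Sigma\subset\mathrm{int}(V)$ separating $M\times\{0\}$ from $M\times\{1\}$, with $H_\Sigma = h|_\Sigma$ and satisfying the usual $\mu$-bubble stability inequality. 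Combining this inequality with the Gauss formula $2\,\mathrm{Ric}_V(\nu,\nu) = Sc(g) - Sc_\Sigma + H_\Sigma^2 - |A|^2$, the pinching $|A|^2\geq H_\Sigma^2/(n-1)$, the bound $Sc(g)\geq\sigma$, and the Riccati identity for $h$, one obtains the pointwise estimate
\begin{equation*}
\int_\Sigma \bigl(|\nabla\varphi|^2 + \tfrac{1}{2} Sc_\Sigma\, \varphi^2\bigr)\, dA \geq 0 \qquad (\varphi\in C^\infty(\Sigma)),
\end{equation*}
with strict positivity thanks to the slack $L>L_0$. For $n\geq 4$ one has $4(n-2)/(n-3)\geq 2$, so this positivity propagates to the conformal Laplacian of $\Sigma$, and a conformal change by its first eigenfunction endows $\Sigma$ with a metric of positive scalar curvature; the low-dimensional cases $n\leq 3$ are handled directly by Gauss--Bonnet.

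The final step is to show that $\Sigma$ inherits the over-torical domination. Composing the given degree-$d$ map $V\to T^{n-1}\times[0,1]$ with the projection to $T^{n-1}$ yields $\bar f\colon V\to T^{n-1}$ whose restriction to $\Sigma$ has degree $d\neq 0$, since $[\Sigma]=[M\times\{0\}]$ in $H_{n-1}(V;\Z)$. The Schoen--Yau theorem then forbids a metric of positive scalar curvature on a closed oriented manifold of dimension $n-1\leq 7$ admitting a non-zero-degree map to a torus, contradicting the previous paragraph.

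The main obstacle is the sharp calibration in the second step: tracking all the dimensional constants through Gauss, the Cauchy--Schwarz pinching of $|A|^2$, and the conformal rescaling so that the exact length $L_0=2\pi\sqrt{(n-1)/(\sigma n)}$ emerges as the critical blow-up length of the Riccati ODE. The dimension ceiling $n\leq 8$ is dictated twice over --- once by the regularity of minimizing hypersurfaces and once by the range of validity of Schoen--Yau --- which is why it appears in the hypothesis.
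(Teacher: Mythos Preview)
The paper does not actually prove this statement: it is quoted verbatim as Gromov's theorem from \cite{Gro18}*{Section 2} and closed with a \verb|\qed|, so there is no in-paper argument to compare against. Your sketch is essentially the $\mu$-bubble argument Gromov himself gives in that reference, and the outline is accurate --- the Riccati potential with blow-up length $L_0$, the existence and regularity of a stable $\mu$-bubble $\Sigma$ in dimensions $n-1\leq 7$, the second-variation/Gauss rearrangement yielding positivity of the operator $-\Delta+\tfrac12 Sc_\Sigma$, the conformal change to positive scalar curvature, and finally the Schoen--Yau obstruction for the induced non-zero-degree map $\Sigma\to T^{n-1}$. One small remark: the step ``the restriction of $\bar f$ to $\Sigma$ has degree $d$'' deserves a word of justification beyond $[\Sigma]=[M\times\{0\}]$ in $H_{n-1}(V;\Z)$, since $\Sigma$ need not be connected; one argues instead via the intersection number with an arc from $M\times\{0\}$ to $M\times\{1\}$, or equivalently that $\Sigma$ is homologous to a fibre and the degree is computed by pairing with the pulled-back generator of $H^{n-1}(T^{n-1})$.
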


In general he conjectures \cite{Gro18}*{11.12, Conjecture C} that if a closed manifold $M$ of dimension $\geq 5$ does not admit a metric of positive scalar curvature, then (1) holds for Riemannian bands $(V,g)$, diffeomorphic to $M\times [0,1]$, with $Sc(g)\geq\sigma>0$.\\

Using the Dirac operator, Zeidler \cite{RZ19}*{Theorem 1.4} and Cecchini \cite{Ce20}*{Theorem D} proved this conjecture for closed spin manifolds $M$ with non-vanishing Rosenberg index (an obstruction to admitting a metric of positive scalar curvature). See also \cite{RZ20} for more information regarding their methods.\\

In this paper we establish band width inequalities for a Riemannian band $(V^n,g)$ under a different condition on the metric $g$. Instead of a lower scalar curvature bound we assume that all metric balls of a certain radius $R>0$ in the universal cover $(\ti{V},\ti{g})$, have volume less than $\varepsilon_nR^n$ for some small constant $\varepsilon_n>0$, only depending on the dimension $n$.\\

This type of metric condition was studied by Larry Guth \cites{Gu10a,Gu10b,Gu11,Gu17}. We quickly summarize the motivation behind it, before we state our main results.

\subsection{Macroscopic scalar curvature}

The value $Sc(g,p)$ of the scalar curvature at a point $p$ in a Riemannian manifold $(M^n,g)$ appears as a coefficient in the Taylor expansion of the volume of a geodesic ball of radius $R$ around $p$:
\begin{equation} 
{\rm vol}(B_R(p))= \omega_nR^n\left(1-\frac{Sc(g,p)}{6(n+2)}R^2+\mathcal{O}(R^4)\right),
\end{equation}
where $\omega_n$ is the volume of the unit ball in euclidean space $\mathbb{R}^n$. It follows that if the scalar curvature of $(M,g)$ at a point $p$ is positive, there is a $\lambda(M,g,p)>0$ such that all $R$-balls in $(M,g)$ centered at $p$ with $R<\lambda(M,g,p)$ have $\vol(B_R(p))<\omega_nR^n$.\\
Hence, for $R$ small enough, the scalar curvature of $(M,g)$ at $p$ can be quantified by comparing the volumes of $R$-balls around $p$ with their counterparts in $S^n$, $\R^n$ and $\mathbb{H}^n$, the standard simply connected manifolds with constant scalar curvature.\\
If we carry out this volume comparison (see \cite[Section 7]{Gu10a}) for all radii $R>0$ we get:

\begin{Definition}
Let $(M^n,g)$ be a Riemannian manifold and $p\in M$. The \emph{macroscopic scalar curvature} at scale $R$ at $p$, denoted by $Sc_R(p)$, is defined to be the number $S$ such that the volume of the ball of radius $R$ around any lift of $p$ in the universal cover of $M$ equals the volume of the ball of radius $R$ in a simply connected space with constant curvature and with scalar curvature $S$. 
\end{Definition}

The universal cover is used to ensure that the macroscopic scalar curvature of a flat torus is zero at any scale. If $M$ is closed and one does not consider balls in the universal cover, but in $(M,g)$ itself, then the macroscopic scalar curvature would be positive at a large enough scale. For infinitesimally small radii we have $\lim_{R\rightarrow0}Sc_R(g,p)=Sc(g,p)$ by (2).\\

The most prominent conjecture in this vein is due to Gromov \cite{Gro85}:

\begin{Vermutung} \label{conj:gr}
Let $g$ be a metric on a closed aspherical manifold $M^n$. For any radius $R>0$ there is a point $p$ in the universal cover $(\ti{M},\ti{g})$ with $\vol(B_R(p))\geq\omega_nR^n$.
\end{Vermutung}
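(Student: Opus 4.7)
The plan is to approach this by contradiction, using the filling/nerve technology developed by Gromov for systolic inequalities and refined by Guth for volume-of-balls bounds; a version of this strategy is what yields the weaker statement with some dimensional constant $\e_n$ in place of $\omega_n$ that presumably underpins the paper's main band width theorem.

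First I assume that some $R>0$ violates the conjecture, so $\vol(B_R(p))<\omega_n R^n$ for every $p\in\ti M$. Take a maximal $R/2$-separated net $\{p_i\}\subset\ti M$ and consider the cover by balls $\{B_R(p_i)\}$. Disjointness of the half-radius balls together with the volume hypothesis bounds the multiplicity of this cover by a constant depending only on $n$, and partitions of unity subordinate to the cover furnish a $\pi_1(M)$-equivariant map $f\colon\ti M\to\mathcal{N}$ to its nerve, which descends to $\ov f\colon M\to\mathcal{N}/\pi_1(M)$.

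The heart of the argument is a filling radius control: the volume bound on every $R$-ball, inserted into Gromov's filling inequality $\fil(M)\leq c_n\vol(M)^{1/n}$ in a suitably localised equivariant form, bounds the radius at which the image $\ov f_*[M]$ can be filled in $\mathcal{N}/\pi_1(M)$. Asphericity then enters decisively: since $M=K(\pi_1(M),1)$, the nerve quotient admits a canonical map back to $M$, and composing with $\ov f$ produces a self-map of $M$ through which any filling of $\ov f_*[M]$ pulls back to a filling of $[M]$ itself. But $[M]\in H_n(M;\Z)$ is nonzero for a closed orientable manifold and cannot bound in any space mapping to $M$, yielding the desired contradiction.

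The principal obstacle is the sharp constant $\omega_n$: every step in the outline above loses a dimensional factor, so the method as it stands only rules out balls of volume below some $\e_n R^n$ with $\e_n \ll \omega_n$. Bridging this gap is where Conjecture~\ref{conj:gr} becomes genuinely hard, since it would amount to a sharp volume comparison with $\R^n$ playing, in reverse, the role of the Bishop--Gromov inequality; no nerve-and-filling argument of the above type appears capable of producing the sharp constant, and this is presumably why the paper's main theorem is formulated with a small $\e_n$ rather than with $\omega_n$.
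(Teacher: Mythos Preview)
The statement you are attempting to prove is labeled \emph{Vermutung} (Conjecture) in the paper, not Theorem, and the paper contains no proof of it. It is stated as Gromov's open conjecture from \cite{Gro85}, and immediately afterward the paper remarks that Guth proved only the weakened version in which $\omega_n$ is replaced by some smaller dimensional constant $\varepsilon_n$. So there is no ``paper's own proof'' to compare your proposal against: the sharp statement remains open, and you correctly identify in your final paragraph that the nerve-and-filling machinery cannot reach the constant $\omega_n$.

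Even as a sketch of the weaker $\varepsilon_n$-statement, your outline has a structural gap. Bounding the multiplicity of the cover $\{B_R(p_i)\}$ by a packing argument gives only a bound of the form $C(n)$, not the bound $n$ that would be needed to make the nerve $(n-1)$-dimensional; so the map $\ov f$ you build does not obviously factor through an $(n-1)$-complex, and the contradiction with essentiality does not follow directly. The route that actually works (and that the paper invokes via Theorem~\ref{thm:pap}) is the Guth--Papasoglu argument bounding the Urysohn width $UR_{n-1}$ in terms of local volume, which is a genuinely different and more delicate construction than the straightforward nerve of a net. Your filling-radius step is also too vague as written: what one really uses is Lemma~\ref{lem:ineq} ($\fil\leq UR_{n-1}$) together with the fact that the universal cover of an aspherical manifold is uniformly contractible and hence has infinite filling radius (Lemmas~\ref{lem:geom} and~\ref{lem:inf}).
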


\begin{Bemerkung}
This would imply that a closed aspherical manifold does not admit a metric with positive macroscopic scalar curvature at any scale $R>0$. It follows that  it can not admit a metric with positive scalar curvature either, since:
\begin{itemize}
\item For a \emph{closed} Riemannian manifold $(M^n,g)$ with positive scalar curvature, we can find a \emph{uniform} constant $\lambda(M,g)$ such that all $R$-balls in $(M,g)$ with $R<\lambda(M,g)$ have $\vol(B_R(p))<\omega_nR^n$ (compare the paragraph after (2)).\\
\item For $R$ small enough the $R$-balls in $(M,g)$ agree with the $R$-balls in $(\ti{M},\ti{g})$.
\end{itemize}
\end{Bemerkung}

In \cite[Corollary 3]{Gu11} Guth proved a version of Conjecture \ref{conj:gr}, where $\omega_n$ is replaced by a smaller constant $\varepsilon_n$. Recently his results were further generalized and improved upon by Papasoglu \cite[Theorem 3.1]{Pap19} and Nabutovsky \cite[Theorem 2.7]{Nab19}.

\subsection{Main results}

We introduce a class of orientable manifolds we call \emph{filling enlargeable} (see Definiton \ref{def:fill}). This notion combines Gromov and Lawson's \cite{Gro80} classical definition of \emph{enlargeability} with the \emph{filling radius} of a complete oriented Riemannian manifold, a metric invariant that was introduced by Gromov in \cite{Gro83}*{Section 1}.\\

Some important features of filling enlargeable manifolds we establish in this paper are:
\begin{itemize}
\item If a closed orientable manifold is enlargeable or aspherical, then it is filling enlargeable (see Propositions \ref{prop:enlfill} and \ref{prop:asphfill}).
\item Closed filling enlargeable manifolds are essential. In fact we prove an even stronger statement in Theorem \ref{thm:3} (see also Remark \ref{rem:zess} and Remark  \ref{rem:qess}).
\item For all $n\geq 1$ there is a constant $\e_n>0$ such that the following holds. Let $M^n$ be a closed filling enlargeable manifold and $g$ a Riemannian metric on $M$.  For any radius $R>0$ there is a point $p$ in the universal cover $(\ti{M},\ti{g})$ with $\vol(B_R(p))\geq\e_nR^n$ (see Proposition \ref{prop:wenl}).
\end{itemize}

In light of the third point and in the context of macroscopic scalar curvature we prove the following macroscopic analog of the band width inequalities with scalar curvature for bands over filling enlargeable manifolds:

\begin{Satz} \label{thm:2}
For all $n\geq1$ there is a constant $\varepsilon_n>0$ such that the following holds. Let $M^{n-1}$ be a closed filling enlargeable manifold and $V:=M\times[0,1]$. If $g$ is a Riemannian metric on $V$ with the property that all unit balls in the universal cover $(\ti{V},\ti{g})$ have volume less than $\frac{1}{2}\varepsilon_n$, then $\wid(V,g)\leq 1$.
\end{Satz}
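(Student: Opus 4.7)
I would argue by contradiction: suppose $\wid(V,g)>1$, and derive a contradiction with Proposition \ref{prop:wenl} applied to a suitable closed filling enlargeable $n$-manifold built out of $V$.

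Concretely, I would double $V$ along its boundary to form the closed $n$-manifold $N:=V\cup_{\p V}V$, diffeomorphic to $M\times S^1$, carrying the doubled metric $\hat g$ (a priori $C^0$, but smoothable near the crease with an arbitrarily small change to unit-ball volumes). Since $M$ is filling enlargeable and $S^1$ is aspherical, I expect $N=M\times S^1$ to be filling enlargeable as well. The cleanest route should be via Theorem \ref{thm:3}, which identifies filling enlargeability with a property of the image of the fundamental class under the classifying map of the universal cover: one checks that $[N]\in H_n(B\pi_1(N))$ is obtained from the non-trivial image of $[M]$ by cross product with the circle factor.

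The geometric heart of the proof is then to bound volumes of unit balls in $(\ti N,\ti{\hat g})$ in terms of those in $(\ti V,\ti g)$. The cover $\ti N$ is tiled by isometric copies of $\ti V$ glued along lifts of $\p V$, and a unit ball centered at $x\in\ti V_0$ meets at most the copy $\ti V_0$ together with its two neighbors across the two boundary lifts. Using the reflection isometry across each shared boundary, the intersection with each neighbor embeds isometrically into the unit ball $B_1^{\ti V_0}(x)$, landing in the subset of points near the corresponding boundary (of depth $1-a$, resp.\ $1-b$, where $a,b$ are the distances from $x$ to the two boundary lifts of $\ti V_0$). For $\wid(V,g)>1$, these two reflected subsets of $B_1^{\ti V_0}(x)$ are disjoint by a direct triangle inequality (a common point would force $\wid(V,g)\leq(1-a)+(1-b)\leq 1$), so
\[
\vol(B_1^{\ti N}(x))\leq 2\vol(B_1^{\ti V_0}(x))<2\cdot\tfrac{1}{2}\e_n=\e_n.
\]
Applying Proposition \ref{prop:wenl} with $R=1$ to the filling enlargeable $n$-manifold $(N,\hat g)$ then yields a unit ball in $\ti N$ of volume at least $\e_n$, contradicting this estimate.

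The main obstacle, I expect, is the formal verification that $M\times S^1$ inherits filling enlargeability from $M$; this should follow from Theorem \ref{thm:3}, but the product structure and compatibility of classifying maps must be handled carefully. A secondary technicality is smoothing the doubled $C^0$ metric near the crease so that the unit-ball bound survives in the smooth setting; this is the kind of routine perturbation that nonetheless deserves a line of justification. The factor $\tfrac{1}{2}$ in the hypothesis is precisely what the doubling construction requires in order to end up strictly below $\e_n$ after accounting for the two reflected neighbor contributions.
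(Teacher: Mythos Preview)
Your overall architecture matches the paper's exactly: assume $\wid(V,g)>1$, pass to the doubled manifold $N\cong M\times S^1$, show that $N$ is filling (hence width) enlargeable, bound unit balls in $\ti N$ by $\e_n$, and contradict Proposition~\ref{prop:wenl}. Two of your subsidiary steps are handled differently in the paper, and one of your suggested routes does not quite go through.

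For the volume bound, the paper works with the intermediate object $D$, the metric double of $(\ti V,\ti g)$ itself. Any unit ball in $D$ satisfies $B_1(q(p))\subset q\bigl(B_1(p_1)\cup B_1(p_2)\bigr)$ where $p_1,p_2$ are the two mirror preimages, giving the factor $2$ with no case analysis and without yet using $\wid>1$. Then, since $\ti V$ is simply connected, Proposition~\ref{prop:sys} yields $\tfrac{1}{2}\sys(D,g_d)=\wid(V,g)>1$, so unit balls in the universal cover $\ti D$ are isometric to unit balls in $D$; finally one observes that $\ti D$ coincides with the universal cover of the double of $(V,g)$. This replaces your three-tile reflection and disjointness argument (which is correct, but needs the auxiliary inequality $a+b\geq\wid>1$ to make $(1-a)+(1-b)<1$) by a two-line computation.

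The more substantive point is filling enlargeability of $M\times S^1$. Your proposed route through Theorem~\ref{thm:3} would require knowing that the cross product $\phi_*[M]\times[S^1]$ lies outside $H_n^{sm}(B\pi_1(M)\times S^1;\Q)$ whenever $\phi_*[M]\notin H_{n-1}^{sm}(B\pi_1(M);\Q)$; the paper establishes no such compatibility of the small subspace with cross products, so this route is not available as stated. Instead, the paper proves Proposition~\ref{prop:fillprod} directly: Lemma~\ref{lem:fillprod} shows $\fil(M\times S^1,g\oplus g_r)\geq\fil(M,g)$ once the circle is large enough, via a cap-product argument with a cohomology class dual to $[S^1]$, and then one passes to a suitable covering circle. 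This is precisely the ``main obstacle'' you anticipated, and the resolution is an independent filling-radius estimate rather than a homological one.

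Your smoothing concern is moot: the paper works throughout with polyhedral Riemannian metrics (see the discussion of the metric double at the start of Section~3), so the doubled metric $g_d$ is already a legitimate object for all the width and filling-radius machinery, and no perturbation is needed.
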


It is a natural question to ask whether Theorem \ref{thm:2} holds true for all essential manifolds. However, as it turns out, there are some immediate counterexamples (see Example \ref{ex}). In order to obtain similar results one has to add further assumptions regarding the systole (the length of the shortest noncontractible loop) of $(V,g)$:

\begin{Satz} \label{thm:main}
For all $n\geq1$ there is a constant $\e_n>0$ such that the following holds. Let $M^{n-1}$ be a closed essential manifold and $V:=M\times[0,1]$. Let $g$ be a Riemannian metric on $V$ and $0<R<\frac{1}{2}\sys(V,g)$. If every ball of radius $R$ in $(\ti{V},\ti{g})$ has volume less than $\frac{1}{2}\e_nR^n$, then $\wid(V,g)\leq R$.
\end{Satz}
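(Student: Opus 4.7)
The strategy is to argue by contradiction: assume $\wid(V,g)>R$, and derive a contradiction with essentiality of $M$ by exhibiting a factorization of the classifying map $M\to B\pi_1(M)$ through a complex of dimension at most $n-2$.

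\textbf{Step 1} (\emph{descending the volume bound to $V$}). The systole hypothesis $R<\frac{1}{2}\sys(V,g)$ implies that the covering projection $\ti V\to V$ restricts to an isometry on every $R$-ball of $\ti V$: otherwise two distinct lifts of some point would lie in a common $R$-ball of $\ti V$ and yield a noncontractible loop in $V$ of length strictly less than $2R<\sys(V,g)$. In particular, every $R$-ball in $(V,g)$ itself has volume less than $\frac{1}{2}\varepsilon_nR^n$.

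\textbf{Step 2} (\emph{volume-to-dimension reduction}). For $\varepsilon_n$ chosen appropriately, the Guth--Papasoglu--Nabutovsky volume-to-dimension machinery (the same engine supporting Proposition \ref{prop:wenl}) applied to the $n$-manifold $(V,g)$ with small $R$-balls produces a continuous map $\Phi:V\to P$ to a simplicial polyhedron $P$ of dimension at most $n-1$ whose fibres all have diameter strictly less than $R$. Because $R<\frac{1}{2}\sys(V,g)$, the $R$-balls of $V$ are simply connected, so $\Phi$ can be arranged via a nerve-style construction to be a $\pi_1$-equivalence; in particular the classifying map $V\to B\pi_1(V)=B\pi_1(M)$ factors up to homotopy through $\Phi$.

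\textbf{Step 3} (\emph{use the width to drop a dimension}). Because $R<\wid(V,g)$, no fibre of $\Phi$ meets both $M_0=M\times\{0\}$ and $M_1=M\times\{1\}$, so the compact images $\Phi(M_0)$ and $\Phi(M_1)$ are disjoint subpolyhedra of $P$. Since $M_0\hookrightarrow V$ is a homotopy equivalence, the restriction $\Phi|_{M_0}\colon M\to P$ factors the classifying map of $M$. Exploiting the disjointness of $\Phi(M_0)$ and $\Phi(M_1)$ together with the product structure of the band (which lets us push $\Phi(M_0)$ along the $[0,1]$-direction away from $\Phi(M_1)$), $\Phi(M_0)$ can be collapsed into a subpolyhedron of $P$ of dimension at most $n-2$. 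The classifying map of $M$ therefore factors through an $(n-2)$-complex, forcing $[M]$ to vanish in $H_{n-1}(B\pi_1(M))$ and contradicting essentiality of $M$.

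The main obstacle is Step 3: one must upgrade mere disjointness of $\Phi(M_0)$ and $\Phi(M_1)$ into an honest drop in the dimension of their target. This is exactly where the product structure of the band is used most crucially, and the constant $\varepsilon_n$ must be calibrated throughout so that the volume-to-dimension reduction of Step 2 delivers a polyhedron admitting this further collapse.
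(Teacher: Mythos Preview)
Your Steps 1 and 2 are sound and indeed parallel the paper's use of Theorem~\ref{thm:pap} and the nerve/$\pi_1$-argument of Lemma~\ref{lem:kar}. The genuine gap is Step 3, which you yourself flag as ``the main obstacle'': you assert that disjointness of $\Phi(M_0)$ and $\Phi(M_1)$ inside the $(n-1)$-complex $P$ lets you collapse $\Phi(M_0)$ into an $(n-2)$-complex, but you give no mechanism for this. The phrase ``push $\Phi(M_0)$ along the $[0,1]$-direction'' is not meaningful in $P$, which has no such direction; the product structure lives in $V$, not in the target. Mere disjointness of two compact subpolyhedra of an $(n-1)$-complex does not force either of them to be homotopic into the $(n-2)$-skeleton, and the $\pi_1$-isomorphism condition does not obviously help. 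Since $M$ is $(n-1)$-dimensional, the factorization $M\to P^{n-1}\to B\pi_1(M)$ you obtain in Step 2 is by itself no contradiction to essentiality; a further dimension drop is truly needed, and you have not supplied one.

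The paper sidesteps this difficulty entirely by \emph{doubling}. Form the metric double $(D,g_d)$ of $(V,g)$; topologically $D\cong M\times S^1$, so $D$ is a closed essential $n$-manifold by Lemma~\ref{lem:ess}. The volume bound in $V$ (hence in $\ti V$) gives that every $R$-ball in $D$ has volume $<\varepsilon_nR^n$, so $UR_{n-1}(D,g_d)\leq R$ by Theorem~\ref{thm:pap}. Now Karasev's inequality (Lemma~\ref{lem:kar}) applied to the \emph{closed} essential polyhedron $D$ yields $\sys(D,g_d)\leq 2R$. Finally, Proposition~\ref{prop:sys}/Lemma~\ref{cor} give $\sys(D,g_d)\geq\min\{\sys(V,g),2\wid(V,g)\}$, and since $\sys(V,g)>2R$ by hypothesis one concludes $\wid(V,g)\leq R$. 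The doubling converts your elusive ``dimension drop on a band'' into a clean systolic estimate on a closed manifold, where the $(n-1)$-dimensional nerve already contradicts $n$-dimensional essentiality.
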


In Section 4 we follow ideas of Brunnbauer and Hanke \cite{BH10} and study some functorial properties of filling enlargeable manifolds. In particular we construct a vector subspace $H_n^{sm}(B\Gamma;\Q)\subset H_n(B\Gamma;\Q)$ of 'small classes' in the rational group homology of a finitely generated group $\Gamma$ and prove:

\begin{Satz} \label{thm:3}
Let $M$ be a closed oriented manifold of dimension $n$. Then $M$ is filling enlargeable if and only if $\phi_*[M]\in H_n(B\pi_1(M);\Q)$ is not contained in $H_n^{sm}(B\Gamma;\Q)$.
\end{Satz}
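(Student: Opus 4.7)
My plan is to adapt the Brunnbauer--Hanke strategy of \cite{BH10} to the filling-enlargeable setting. First, I would define $H_n^{sm}(B\Gamma;\Q)$ to be the $\Q$-linear span inside $H_n(B\Gamma;\Q)$ of classes of the form $f_*[N]$, where $(N, f\colon N\to B\Gamma)$ ranges over pairs with $N$ a closed oriented $n$-manifold such that $(N, f)$ is \emph{not} filling enlargeable (with the definition of filling enlargeability phrased relative to the $\Gamma$-cover associated to the reference map $f$, reducing to the original Definition \ref{def:fill} when $f=\phi$). With this set-up the direction ``$M$ not filling enlargeable $\Rightarrow \phi_*[M]\in H_n^{sm}(B\pi_1(M);\Q)$'' is tautological.

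For the substantive direction, suppose $\phi_*[M] = \sum_i q_i (f_i)_*[N_i]$ in $H_n(B\pi_1(M);\Q)$ with each $(N_i, f_i)$ not filling enlargeable. I would lift this rational homological identity to a relation in the rational oriented bordism group over $B\pi_1(M)$ using the Thom rationalisation
$$
\Omega_n^{SO}(X)\otimes\Q \;\cong\; \bigoplus_{i+j=n} H_i(X;\Q)\otimes\bigl(\Omega_j^{SO}\otimes\Q\bigr),
$$
which identifies the kernel of the fundamental-class map $\Omega_n^{SO}(X)\otimes\Q\to H_n(X;\Q)$ with the positive-$j$ summands. Clearing denominators produces a positive integer $\ell$, integers $r_i=\ell q_i$, and a compact oriented cobordism $(W, F)$ over $B\pi_1(M)$ relating $\ell$ disjoint copies of $(M,\phi)$ to a disjoint union consisting of $|r_i|$ copies of each $(N_i, f_i)$ (with signs controlled by $\mathrm{sgn}(r_i)$) together with a single pair $(W_k, F_k)$ realising the residual kernel summand.

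The key observation is that every component of the non-$M$ boundary of $W$ is not filling enlargeable. The copies of $(N_i, f_i)$ satisfy this by hypothesis, while the kernel component $(W_k, F_k)$ has $F_{k*}[W_k]=0$ in $H_n(B\pi_1(M);\Q)$, i.e.\ is not rationally essential; since closed filling enlargeable manifolds are rationally essential (a strengthening of essentialness discussed in Remark \ref{rem:qess}), this forces $(W_k, F_k)$ to be not filling enlargeable. What remains is to establish the cobordism invariance of filling enlargeability of pairs $(M, f\colon M\to B\Gamma)$ over $B\Gamma$, together with the elementary fact that a disjoint union is filling enlargeable if and only if at least one component is; applying cobordism invariance to $(W, F)$ then transports not-filling-enlargeability from the entire non-$M$ boundary back to $\ell M$, and hence to $M$ itself.

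The principal obstacle will be the cobordism invariance. Starting from the data witnessing filling enlargeability of one boundary component of $W$---a family of sphere-valued Lipschitz maps on the relevant cover with controlled degree and controlled filling radius on the domain---one has to propagate this data across $\ti W$ to the other boundary while retaining the quantitative estimates on both the Lipschitz constants and the filling radius. Following the surgery-theoretic approach of \cite{BH10}, I would decompose $W$ as a sequence of elementary handle attachments of index between $1$ and $n-1$ and verify that each preserves filling enlargeability by a direct estimate on the filling radius through the surgery; the restriction on handle indices is precisely what ensures that the cover upstairs is modified in a controllable way at each stage.
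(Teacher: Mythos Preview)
Your route differs substantially from the paper's. The paper does \emph{not} pass through oriented bordism or cobordism invariance at all. Instead, following \cite{BH10}*{Section 3}, it extends the notion of filling enlargeability from manifolds to arbitrary classes $c\in H_n(X;\Q)$ in a simplicial complex $X$ with finitely generated $\pi_1$: choose a finite connected $\pi_1$-surjective subcomplex $S$ carrying $c$, and call $c$ filling enlargeable if for every $r>0$ some connected cover $\ov{S}\to S$ has the transfer $p^!(c)$ nonvanishing in the $r$-neighbourhood of $\iota(\ov{S})$. After checking independence of $S$ and a functoriality statement (Proposition~\ref{prop:funct}), one gets immediately that $M$ is filling enlargeable if and only if $\phi_*[M]$ is (Corollary~\ref{cor:hominv}). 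The subspace $H_n^{sm}(B\Gamma;\Q)$ is then \emph{defined} to be the set of non--filling-enlargeable classes, and the only thing left is to show this set is actually a $\Q$-subspace (Proposition~\ref{prop:small}). That proof is a two-line pigeonhole: if $p^!(c+d)$ survives in the $k$-neighbourhood for every $k\in\mathbb{N}$, then for infinitely many $k$ one of $p^!(c)$, $p^!(d)$ does, so $c$ or $d$ is filling enlargeable. No geometry beyond Lemma~\ref{lem:fill} is needed.

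Your bordism approach is in principle a valid alternative, but it has a real gap at the step you yourself flag. The data witnessing filling enlargeability is \emph{not} ``a family of sphere-valued Lipschitz maps with controlled degree''---that is ordinary enlargeability. Here the witness is simply a lower bound on the filling radius of a cover, i.e.\ the nonvanishing of a locally finite fundamental class in a tubular neighbourhood of the Kuratowski embedding. Propagating such a nonvanishing statement across a handle attachment is not obviously tractable: the filling radius is a global invariant of the metric and there is no evident local surgery lemma analogous to the degree-preservation arguments used for enlargeability. The paper's approach sidesteps this entirely by never leaving the homological setting, which is both shorter and avoids the quantitative control you would need.
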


\textbf{Acknowledgments:} This work is part of my ongoing doctoral dissertation project at Augsburg University. I want to thank my advisor Bernhard Hanke for his continued support and Benedikt Hunger for helpful comments after reading an earlier version of this article. The author was supported by a doctoral grant from the German Academic Scholarship Foundation.

\section{Largeness properties of manifolds}

\subsection{Width, filling radius and systole} Even though our results are stated for smooth manifolds, almost all of our actual work takes place in the setting of simplicial complexes with piecewise smooth metrics (see \cite{bab02}*{Section 2}). 

\begin{Definition}
By a Riemannian metric on a $k$-simplex $\Delta^k$ we understand the pullback of an arbitrary Riemannian metric on $\R^k$ via an affine linear embedding $\Delta^k\hookrightarrow \R^k$. A \emph{Riemannian metric} on a simplicial complex $X$ is given by a Riemannian metric $g_\tau$ on every simplex, such that $g_{\tau'}\equiv g_{\tau}|_{\tau'}$ for $\tau'\subset\tau$. We call $(X,g)$ a \emph{Riemannian polyhedron}.
\end{Definition}

A Riemannian metric $g$ enables us to measure the lengths of piecewise smooth curves in a simplicial complex $X^n$. As for Riemannian manifolds one obtains a path metric $d_g$ on $X^n$ if $X^n$ is connected. Moreover there is an obvious notion of $n$-dimensional Riemannian volume, coinciding with the $n$-dimensional Hausdorff measure. In the following we introduce some classical metric invariants used to describe the size of $(X,g)$.\\
We remind the reader that a metric space is called \emph{proper} if every closed and bounded subset is compact. Furthermore a continuous map between metric spaces is called \emph{proper} if the preimage of every bounded set is bounded. It is a classical result, that a path metric space is proper if and only if it is locally compact and complete.

\begin{Bemerkung}\label{rem:proper}
In most of the literature a continuous map between topological spaces is called proper, if the preimage of every compact set is compact. Our definition coincides with the classical one for continuous maps between proper metric spaces.\\
While Theorem \ref{thm:2} and Theorem \ref{thm:main} are concerned with compact metric spaces, the relevance of proper metric spaces is that we will be talking about possibly infinite-sheeted covering spaces of these compact spaces.
\end{Bemerkung}

\begin{Definition} \label{def:wid}
The $k$-dimensional Alexandrov width $UR_{k}(X,g)$ of a proper Riemannian polyhedron $(X,g)$ is the infimum over all values $R>0$, such that: there is a continuous map $f:X\rightarrow Y^{k}$ to a locally finite $k$-dimensional simplicial complex $Y$, with the property that the preimage $f^{-1}(\tau)$ of every simplex $\tau\subset Y$ is contained in a metric ball of radius $R$ in $X$.
\end{Definition}

While originally appearing in the context of topological dimension theory, this notion was popularized in Riemannian Geometry by works of Gromov (see \cite{Gro83}*{Appendix 1} or \cite{Gro88}). In the course of this paper we will often use an equivalent definition of the $k$-dimensional Alexandrov width in terms of open covers, namely: $UR_{k}(X,g)$ is the infimum over all $R$ such that $X$ admits a locally finite cover by open sets of radii $\leq R$ (i.e. contained in a metric ball of radius $R$) and multiplicity $\leq k+1$. The following Lemma, which (only in the compact case) appears in \cite{Gro88}*{Section (H)}, modulates between this definition and Definition \ref{def:wid}. For the convenience of the reader, and since we need to be a little bit more careful, when considering non-compact polyhedra, we include a proof of this result.

\begin{Lemma} \label{lem:mod}
A proper Riemannian polyhedron $(X,g)$ has $UR_{k}(X,g)<R$ if and only if there is a locally finite open cover of $X$ with multiplicity $\leq k+1$ and radius $<R$
\end{Lemma}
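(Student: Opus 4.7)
The plan is to prove both directions by exhibiting the standard correspondence between maps to simplicial $k$-complexes and open covers of multiplicity $\leq k+1$, built from partitions of unity on one side and the open star cover of the target on the other. Throughout I keep local finiteness in mind, since $X$ need not be compact.

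For the direction from covers to maps, suppose $\{U_i\}_{i\in I}$ is a locally finite open cover of $X$ with multiplicity $\leq k+1$ and radius $<R$. I would form the nerve $N$ of this cover (a simplex is spanned by $\{U_{i_0},\ldots,U_{i_m}\}$ iff $U_{i_0}\cap\cdots\cap U_{i_m}\neq\emptyset$), which is locally finite and at most $k$-dimensional by assumption. Pick a partition of unity $\{\phi_i\}$ subordinate to $\{U_i\}$ with $\mathrm{supp}(\phi_i)\subset U_i$, and define $f\colon X\to N$ by $f(x)=\sum_i\phi_i(x)v_i$; continuity and properness of $f$ follow from local finiteness. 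For each simplex $\tau=[v_{i_0},\ldots,v_{i_m}]$ of $N$ one checks that $f^{-1}(\tau)\subset\bigcup_jU_{i_j}$. Since the $U_{i_j}$ share a common point $p\in\bigcap_jU_{i_j}$ and each $U_{i_j}$ is contained in a ball of radius $<R$, any $q\in U_{i_j}$ satisfies $d(q,p)<R$, so the whole union lies in a ball around $p$ of radius $<R$. This gives $UR_k(X,g)<R$.

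For the reverse direction, suppose $f\colon X\to Y^k$ witnesses $UR_k(X,g)<R$, with $f^{-1}(\tau)\subset B_{R'}(x_\tau)$ for some $R'<R$ and every simplex $\tau\subset Y$. For each vertex $v$ of $Y$ set $U_v:=f^{-1}(\mathrm{st}(v))$, the preimage of the open star. The family $\{U_v\}$ is an open cover because the open stars cover $Y$; it is locally finite because $Y$ is locally finite; and it has multiplicity $\leq k+1$ because a point in the interior of a simplex of dimension $d\leq k$ lies in exactly $d+1$ open stars. For the radius, note that $U_v=\bigcup_{\tau\ni v}f^{-1}(\mathrm{int}(\tau))$. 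Any point $p\in f^{-1}(v)$ lies in each $f^{-1}(\tau)$ with $v\in\tau$, so the triangle inequality shows $U_v\subset B_{R}(p)$ after absorbing the slack $R-R'$. The case $f^{-1}(v)=\emptyset$ can be handled either by discarding such $v$ (then $U_v=\emptyset$ as a coarser argument shows after a slight perturbation of $f$) or, more uniformly, by first composing with a cellular approximation so that vertices of $Y$ have nonempty preimages.

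The main obstacle I expect is getting the radius bound to match $<R$ rather than $<2R$ in both directions. The natural naive estimates produce a factor of two, but this is absorbed by the strict inequality $R'<R$: since we only need the property for some $R'$ slightly smaller than $R$, and then slightly shrinking the cover sets $U_i$ or refining the map $f$ (e.g.\ by passing to a barycentric subdivision of $Y$, under which preimages of simplices only get smaller) closes the gap. The other technical point is ensuring local finiteness survives these constructions, which follows from the assumption that $(X,g)$ is a proper Riemannian polyhedron together with Remark~\ref{rem:proper}.
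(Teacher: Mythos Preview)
Your overall plan is the right one, but there is a genuine gap: the factor-of-$2$ loss you flag is real, and neither of your proposed remedies removes it.

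In the cover $\Rightarrow$ map direction, your claim that ``any $q\in U_{i_j}$ satisfies $d(q,p)<R$'' is false. The set $U_{i_j}$ is only known to lie in \emph{some} ball of radius $<R$, not in the ball of radius $R$ about the common point $p$; the triangle inequality gives $d(q,p)<2R$. The correct observation here (and what the paper uses) is that for the nerve map the preimage of a simplex is contained in a \emph{single} $U_i$, not in the union: if $\phi(x)$ lies in the simplex on $v_{i_0},\ldots,v_{i_m}$ then every $\phi_{i_j}(x)>0$, hence $x\in U_{i_0}$. This gives radius $<R$ directly with no factor of $2$.

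In the map $\Rightarrow$ cover direction the problem is more serious. Your open-star cover gives only $U_v\subset B_{2R'}(p)$, and the slack $R-R'$ can be arbitrarily small, so it cannot absorb a multiplicative factor. Barycentric subdivision does not help either: after subdividing, the open star of a new vertex $v'_\sigma$ is still contained in the \emph{closed star} of $\sigma$ in the original $Y$, which is a union of several top simplices of $Y$, so you are back to a union of preimages and the same $2R'$ bound. The paper avoids this entirely by a different construction: it first shows (using properness of $X$) that $f$ is proper, then uses a compactness argument to find for each simplex $\tau\subset Y$ an $\varepsilon(\tau)>0$ with $f^{-1}(U_{\varepsilon(\tau)}(\tau))$ still contained in a ball of radius $<R$. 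The cover of $Y$ is then built skeleton-by-skeleton---small balls about vertices, then $\varepsilon$-neighbourhoods of edges with the vertex balls removed, and so on---arranging at each stage that the new sets are pairwise disjoint, which yields multiplicity $\le k+1$ without ever taking a union over the simplices of a star.
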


\begin{proof}
Suppose that $X$ has a locally finite open cover $O_i$ with multiplicity $\leq k+1$ and radius $<R$. If we denote by $N$ the nerve of this cover, the nerve map $\phi$ associated to a partition of unity, subordinate to $O_i$, is a continuous map $X\rightarrow N$ and for each simplex $\tau \subset Y$ the preimage $\phi^{-1}(\tau)$ is contained in one of the sets $O_i$.\\
For the other direction suppose that $(X,g)$ has $UR_{k}(X,g)< R$ i.\ e.\ there is a locally finite simplicial complex $Y^k$ and a continuous map $f:X\rightarrow Y$ such that for each simplex $\tau \subset Y$ the preimage  $f^{-1}(\tau)$ is contained in a ball of radius $UR_{k}(X,g)+\delta<R$ for some small $\delta>0$. We equip $Y$ with the canonical path metric i.e. we equip every simplex with the standard euclidean metric and consider the induced path metric on $Y$. First we want to show that the map $f$ is in fact proper. A bounded subset $K\subset Y$ intersects only finitely many simplices of $Y$. In particular it is contained in a finite subcomplex $K'\subset Y$. Since $(X,g)$ is proper and the preimage of every simplex is closed and bounded we conclude that $f^{-1}(K')$ is bounded. But then $f^{-1}(K)$ is a closed subset of a bounded set and thus it is bounded as well.\\
Since $Y$ is $k$-dimensional, we prove the following: there is an open cover $O_i$ of $Y$ with multiplicity $\leq k+1$ such that $f^{-1}(O_i)$ is contained in a ball of radius $<R$ for all $i$. We start by claiming that for every simplex $\tau\subset Y$ there is a constant $\e(\tau)>0$ such that $f^{-1}(U_{\e(\tau)}(\tau))$ is contained in a ball of radius $<R$ in $(X,g)$. By assumption $f^{-1}(\tau)$ is contained in a ball of radius $UR_{k}(X,g)+\delta<R$ around a point $p\in X$. Now we assume for a contradiction, that for every $\ell\in\mathbb{N}$ there is a point $y_\ell\in U_{1/\ell}(\tau)$ with $f^{-1}(y_\ell)\not\subset B_{R-1/\ell}(p)$ and choose a preimage $x_\ell\notin B_{R-1/\ell}(p)$. Up to a subsequence $(y_\ell)_{\ell\in\mathbb{N}}$ converges to a point $y\in\tau$. Since $(x_\ell)_{\ell\in\mathbb{N}}$ is contained in the compact set $f^{-1}(\overline{U_1(\tau)})$ there is a subsequence converging to a point $x\in X\backslash B_R(p)$, which contradicts the continuity of $f$.\\
To construct $O_i$ we consider the skeleta of $Y$ one at a time, starting with the vertices. If $v\in Y$ is a vertex then $f^{-1}(B_{\e(v)}(v))$ is contained in a ball of radius $<R$ in $(X,g)$ and we can assume that for two vertices $v_1$ and $v_2$ the balls $B_{\e(v_1)}(v_1)$ and $B_{\e(v_2)}(v_2)$ are disjoint. Now let $\gamma\subset Y$ be an edge connecting vertices $v_1$ and $v_2$. Denote $\gamma'=\gamma\backslash(B_{\e(v_1)}(v_1)\cup B_{\e(v_2)}(v_2))$. Then $f^{-1}(U_{\e(\gamma)}(\gamma'))$ is contained in a ball of radius $<R$ in $(X,g)$ and by possibly making $\e(\gamma)$ smaller we can arrange that $U_{\e(\gamma)}(\gamma')$ does not intersect $U_{\e(\eta)}(\eta')$ for any other edge $\eta\subset Y$. We continue this process for all higher skeleta of $Y$ and make sure that for each skeleton all newly introduced open sets are disjoint, which provides the upper bound on the multiplicativity of this cover. Finally $f^{-1}(O_i)$ is the required cover of $(X,g)$.
\end{proof}

For us the most important result concerning the Alexandrov width of a proper Riemannian polyhedron $(X^n,g)$ is the following theorem (which is a version of  \cite{Pap19}*{Theorem 3.1}), providing an upper bound on $UR_{n-1}(X,g)$ under the assumption that for a fixed radius $R>0$ all $R$-balls in $(X,g)$ have very small volume. 

\begin{Satz}\label{thm:pap}
For all $n\geq1$ there is a constant $\varepsilon_n>0$ such that the following holds: If $(X^n,g)$ is a proper Riemannian polyhedron and $R>0$ is a radius such that for every $x\in X$ the volume of the ball $B_R(x)$ is bounded from above by $\varepsilon_nR^n$. Then $UR_{n-1}(X,g)\leq R$.\qed
\end{Satz}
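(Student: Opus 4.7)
The plan is to proceed by induction on the dimension $n$. The base case $n=1$ is elementary: if $\varepsilon_1 < 1$ and $X$ is connected, then for any two points $p,q$ with $d(p,q)>R$ the ball of radius $R$ around their midpoint contains a path portion of length exceeding $R$, contradicting the volume bound; hence each connected component of $X$ has diameter at most $R$, and the map sending each component to a point witnesses $UR_0(X,g)\le R$.

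For the inductive step, after rescaling we may assume $R=1$; by Lemma \ref{lem:mod} it suffices to produce a locally finite open cover of $X$ of multiplicity at most $n$ whose members have radii less than $1$. Following Papasoglu, the strategy is a slice-and-recurse: cut $X$ along an $(n-1)$-polyhedron $\Sigma$ of small local $(n-1)$-volume, apply induction to $\Sigma$, and combine with the chamber decomposition. Fix a constant $\eta\in(0,1/2)$ depending only on $n$ (say $\eta=1/4$) and choose a maximal $\eta$-separated net $\{p_i\}\subset X$, so $\{B_\eta(p_i)\}$ covers $X$. For each $i$, the annulus $A_i := B_1(p_i)\setminus B_{1/2}(p_i)$ has $n$-volume at most $\varepsilon_n$, and the coarea inequality applied to the $1$-Lipschitz function $d(\cdot,p_i)$ on $A_i$ yields a radius $r_i\in(1/2,1)$ with $\vol_{n-1}(\Sigma_i)\le 2\varepsilon_n$, where $\Sigma_i:=d(\cdot,p_i)^{-1}(r_i)$. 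After a small transverse perturbation, $\Sigma:=\bigcup_i \Sigma_i$ is an $(n-1)$-dimensional Riemannian polyhedron, and since $\eta<1/2<r_i$, every connected component (``chamber'') of $X\setminus\Sigma$ sits inside a single ball $B_{r_i}(p_i)$ of radius less than $1$.

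To apply the induction hypothesis to $\Sigma$ one needs $\vol_{n-1}(B_1^\Sigma(x))\le \varepsilon_{n-1}$ for all $x\in\Sigma$. A packing argument, using that $\Sigma_j\cap B_1(x)\ne\emptyset$ forces $d(p_j,x)<2$ and that the $\{p_j\}$ are $\eta$-separated, bounds the number of hypersurfaces $\Sigma_j$ meeting any unit ball in $X$ by a constant $C(n)$ depending only on $n$; summing the individual bounds gives $\vol_{n-1}(B_1^\Sigma(x))\le 2C(n)\varepsilon_n$. Choosing $\varepsilon_n\le \varepsilon_{n-1}/(2C(n))$, the induction hypothesis produces a continuous map $\phi:\Sigma\to Q^{n-2}$ whose fibers lie in unit $(n-1)$-balls of $\Sigma$. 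One then assembles the desired cover of $X$ by combining the chambers of $X\setminus\Sigma$ with slight thickenings of the sets $\phi^{-1}(O_\alpha)$, where $\{O_\alpha\}$ is a good open cover of $Q$ supplied by Lemma \ref{lem:mod}: chambers already lie in unit balls, the thickened pullbacks lie in unit balls of $X$ because $\Sigma\hookrightarrow X$ is $1$-Lipschitz, and the total multiplicity is at most $(n-1)+1=n$.

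The main obstacle is the careful assembly of these pieces. One must verify that the transverse perturbation can be made simultaneously small enough to preserve the local $(n-1)$-volume estimate on $\Sigma$ and the chamber-in-a-ball property, and that the two partial covers --- of a neighborhood of $\Sigma$ (pulled back from $Q$) and of $X\setminus\Sigma$ (the chambers themselves) --- can be glued into a single cover realizing multiplicity exactly $n$ rather than the naive bound $n+1$. This gluing, together with the initial choice of thickening parameter that keeps the pullback fibers inside unit balls of $X$, is where the delicate quantitative estimates of Papasoglu's original argument are needed.
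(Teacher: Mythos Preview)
The paper does not supply a proof of this theorem: it is stated with a terminal \qed\ and, in the remark that follows, is attributed to Guth and Papasoglu (with refinements by Liokumovich--Lishak--Nabutovsky--Rotman and by Nabutovsky). So there is no in-paper argument to compare against; one can only assess your sketch on its own merits.

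Your outline contains a genuine gap at the packing step. You claim that the number of net points $p_j$ with $\Sigma_j\cap B_1(x)\neq\emptyset$ is bounded by a constant $C(n)$ depending only on $n$, on the grounds that such $p_j$ lie in $B_2(x)$ and are $\eta$-separated. In Euclidean space that would follow from a volume comparison, but for an arbitrary proper Riemannian polyhedron no such bound exists: the hypothesis controls $\vol(B_R(\cdot))$ only at the single scale $R=1$, which yields neither an upper bound on $\vol(B_2(x))$ nor a lower bound on $\vol(B_{\eta/2}(p_j))$. Concretely, consider a $2$-polyhedron built from a small disk with $N$ long rectangular strips of width $\delta$ glued along its boundary; for $\delta$ small the unit-ball volume hypothesis is satisfied, yet the ball of radius $2$ about the centre contains $N$ pairwise $\eta$-separated points, with $N$ arbitrary. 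With $C(n)$ uncontrolled, the estimate $\vol_{n-1}(B_1^\Sigma(x))\le 2C(n)\varepsilon_n$ is vacuous and the induction cannot close.

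Papasoglu's actual argument avoids this issue altogether: he does not form $\Sigma$ as a union of metric spheres over a net and never invokes a dimension-only packing bound. Your final worry about the multiplicity of the assembled cover is also legitimate (several chambers may abut a single point of $\Sigma$, so the naive count is not simply $1+(n-1)$), but that becomes relevant only after the volume control on $\Sigma$ is repaired.
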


\begin{Bemerkung}
In the case of complete smooth Riemannian manifolds this was first proved by Guth in \cite{Gu17}. His theorem was generalized by Liokumovich, Lishak, Nabutovsky and Rotman \cite{LLNR} to metric spaces and Hausdorff content instead of volume.\\
Building on ideas from \cite{Gu10b} the proof of this result was significantly simplified by Papasoglu \cite{Pap19}*{Theorem 3.1}. Finally Nabutovsky \cite{Nab19}*{Theorem 2.6} was able to improve the constant $\varepsilon_n$ from an exponential bound to a linear one in the case of compact Riemmanian polyhedra.
\end{Bemerkung}

\begin{Bemerkung}
Any smooth Riemannian manifold $(M,g)$ becomes a Riemannian polyhedron by choosing a smooth triangulation of $M$. For the rest of this paper, a Riemannian metric on $M$, if not explixitly required to be smooth, is a polyhedral metric with respect to some smooth triangulation of $M$. Furthermore all manifolds are assumed to be connected.
\end{Bemerkung}

Next, we revisit the \emph{filling radius} of a complete oriented Riemannian manifold $(M^n,g)$ (we mostly follow \cite{BH10}*{Section 2}). The orientation corresponds to a fundamental class $[M]\in H_n^{lf}(M;\Z)$ in locally finite homology (i.e. we consider infinite chains with the property that each bounded subset intersects only finitely many singular simplices).\\
Denote by $L^\infty(M)$ the vector space of all functions $M\rightarrow\R$ with the uniform 'norm' $\|-\|_\infty$. We consider the affine subspace $L^\infty(M)_b$ of $L^\infty{M}$, that is parallel to the Banach space of all bounded functions on $M$ and contains the distance function $d_g(x,-)$ for some $x\in M$. Notice that $\|-\|_\infty$ defines an actual metric on $L^\infty(M)_b$. The \emph{Kuratowski embedding} 
\begin{displaymath}
\iota_g: (M,d_g)\hookrightarrow L^\infty(M)_b \ \ \ x\mapsto d_g(x,-)
\end{displaymath}
is isometric by the triangle inequality.

\begin{Definition}
The \emph{filling radius} of $(M,g)$ is defined as 
$$ \fil(M,g):=\inf{\{r>0|\iota_{g*}[M]=0\in H_n^{lf} (U_r(\iota_gM);\mathbb{Q})\}},$$
where $U_r(\iota_gM)$ denotes the open $r$-neighborhood of $\iota_gM$ in $L^\infty(M)_b$. If the set on the right hand side is empty we say that $(M,g)$ has infinite filling radius.
\end{Definition}

If $M$ is closed, then $L^\infty(M)_b$ is the vector space of all bounded functions and the above definition coincides with the classical one from \cite{Gro83}*{Section 1}. We remind the reader that for an arbitrary metric space $S$, the space $L^\infty(S)$ of all functions has the following universal property.

\begin{Lemma} \label{lem:univ}
If $Y\subset X$ is a subspace of a metric space and if $f:Y\rightarrow L^\infty(S)$ is an $L$-Lipschitz map, then there exists an extension $F:X\rightarrow L^\infty(S)$ which is also $L$-Lipschitz.\qed
\end{Lemma}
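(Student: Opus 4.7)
The plan is to reduce the vector-valued extension problem to the classical real-valued McShane/Whitney extension theorem, applied coordinatewise in $s\in S$.

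First I would rewrite the data in ``coordinates''. Writing $f_s(y):=f(y)(s)\in\R$ for each $s\in S$ and each $y\in Y$, the assumption $\|f(y_1)-f(y_2)\|_\infty\leq L\, d(y_1,y_2)$ is equivalent to saying that every $f_s\colon Y\to\R$ is $L$-Lipschitz, with the Lipschitz constant uniform in $s$. For each $s$ separately I can then invoke the McShane extension formula
$$
F_s(x):=\inf_{y\in Y}\bigl(f_s(y)+L\cdot d(x,y)\bigr),
$$
which is the standard way to extend an $L$-Lipschitz function on a subset of a metric space to an $L$-Lipschitz function on the ambient space.

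Next I would check well-definedness, compatibility and the Lipschitz estimate. Fix a basepoint $y_0\in Y$ (the case $Y=\emptyset$ is trivial). For any $y\in Y$ the Lipschitz bound on $f_s$ gives $f_s(y)\geq f_s(y_0)-L\, d(y,y_0)$, and combining with the triangle inequality $d(y,y_0)\leq d(y,x)+d(x,y_0)$ yields
$$
f_s(y)+L\, d(x,y)\geq f_s(y_0)-L\, d(x,y_0),
$$
so the infimum defining $F_s(x)$ is finite; it is also bounded above by $f_s(y_0)+L\, d(x,y_0)$. The same arguments show that $F(x)-F(y_0)$ is a bounded function on $S$, so $F(x)$ is a legitimate element of $L^\infty(S)$ in the sense used in the paper. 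Classical McShane arguments then give that every $F_s$ is $L$-Lipschitz on $X$ and agrees with $f_s$ on $Y$ (the equality on $Y$ uses that $f_s(y')+L\, d(y,y')\geq f_s(y)$ for all $y,y'\in Y$, so the infimum at $y$ is realised by $y'=y$).

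Finally I would package this into a map. Define $F\colon X\to L^\infty(S)$ by $F(x)(s):=F_s(x)$. Then $F|_Y=f$ by the coordinatewise agreement above, and for any $x_1,x_2\in X$
$$
\|F(x_1)-F(x_2)\|_\infty=\sup_{s\in S}|F_s(x_1)-F_s(x_2)|\leq L\cdot d(x_1,x_2),
$$
so $F$ is $L$-Lipschitz, as required. There is no genuine obstacle here; the only point that deserves care is checking that $F$ actually takes values in $L^\infty(S)$ (rather than being an arbitrary function-valued map), and this is what the comparison $f_s(y)\geq f_s(y_0)-L\, d(y,y_0)$ combined with the triangle inequality supplies.
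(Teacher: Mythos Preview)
Your proof is correct and is exactly the approach indicated in the paper: the paper does not spell out a proof but simply records the McShane-type formula $F_x(v)=\inf_{y\in Y}(f_y(v)+L\cdot d(x,y))$ and cites Gromov, which is precisely the coordinatewise extension you carry out in detail. If anything, you supply more justification than the paper does (well-definedness, landing in the correct affine slice, the Lipschitz bound), so there is nothing to add.
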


This is due to Gromov \cite{Gro83}*{Page 8}. There it is only stated for closed Riemannian manifolds, but the proof works in the general setting. The extension $F$ is given by
\begin{displaymath}
F_x(v):=\inf_{y\in Y}(f_y(v)+L\cdot d(x,y)).
\end{displaymath}
For our purposes we also need the following property of $F$:

\begin{Lemma} \label{lem:proper}
If $f$ is proper and $d(\cdot,Y)$ is uniformly bounded in $X$, then $F$ is proper.
\end{Lemma}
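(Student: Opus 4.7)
The plan is to reduce properness of $F$ to properness of $f$ via two observations: first, that the explicit formula for $F$ makes it ``$L$-Lipschitz with respect to $Y$'', in the sense that $\|F(x)-f(y)\|_\infty\le L\cdot d(x,y)$ for every $x\in X$ and every $y\in Y$; and second, that the uniform bound on $d(\cdot,Y)$ lets me replace each $x\in X$ by a nearby point of $Y$ at controlled cost.

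The first step is to prove that mixed estimate. The upper bound $F_x(v)\le f_y(v)+L\cdot d(x,y)$ is immediate from the definition of $F_x$ as an infimum, using $y$ itself as the competitor. For the matching lower bound I would use the $L$-Lipschitz property of $f$: for every $y'\in Y$ and every $v\in S$, $f_{y'}(v)\ge f_y(v)-L\cdot d(y,y')$, and the triangle inequality $d(y,y')\le d(y,x)+d(x,y')$ then yields $f_{y'}(v)+L\cdot d(x,y')\ge f_y(v)-L\cdot d(x,y)$. Taking the infimum over $y'\in Y$ gives $F_x(v)\ge f_y(v)-L\cdot d(x,y)$, and combining both inequalities delivers the desired sup-norm bound.

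With this in hand, fix a uniform bound $D>0$ for $d(\cdot,Y)$ on $X$ and, for each $x\in X$, choose some $y_x\in Y$ with $d(x,y_x)\le D+1$; the estimate above then gives $\|F(x)-f(y_x)\|_\infty\le L(D+1)$. Given a bounded set $B\subset L^\infty(S)$, its $L(D+1)$-neighborhood $B'$ is still bounded, so $F(x)\in B$ forces $f(y_x)\in B'$. Properness of $f$ makes $f^{-1}(B')\subset Y$ a bounded subset of $Y$, and since every $x\in F^{-1}(B)$ lies within distance $D+1$ of a point of this bounded set, $F^{-1}(B)$ is contained in its $(D+1)$-neighborhood and is therefore itself bounded in $X$.

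The whole argument is short, and the bulk of the work lies in the Lipschitz-type estimate of the first step; I do not expect any serious obstacle beyond careful bookkeeping with the triangle inequality inside the infimum defining $F_x$.
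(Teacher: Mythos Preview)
Your proof is correct and follows essentially the same route as the paper. The mixed estimate $\|F(x)-f(y)\|_\infty\le L\cdot d(x,y)$ that you carefully derive from the formula is in fact immediate from Lemma~\ref{lem:univ}, since $F$ is $L$-Lipschitz and $F|_Y=f$; the paper simply cites this (``Since $F$ is $L$-Lipschitz \ldots'') and then argues, as you do, that every $x$ with $F(x)$ in a bounded set lies within bounded distance of the $f$-preimage of a slightly larger bounded set.
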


\begin{proof}
Let $K\subset L^\infty(S)$ be a bounded subset. Let $C>0$ be a uniform upper bound of $d(\cdot,Y)$ in $X$. Since $F$ is $L$-Lipschitz we have $F_x\in U_{LC}({\rm im}(f))$ for all $x\in X$. It follows that if $K$ does not intersect $U_{LC}({\rm im}(f))$, then $F^{-1}(K)=\emptyset$.\\
Hence assume that $K\subset U_{LC}({\rm im}(f))$. Consider the bounded set $A:=U_{LC}(K)\cap {\rm im}(f)$. Since $f$ is proper, $f^{-1}(A)\subset Y$ is bounded as well. We claim that $F^{-1}(K)\subset U_C(f^{-1}(A)).$\\
Let $x\notin U_C(f^{-1}(A))$ be arbitrary. Since $d(\cdot,Y)$ is uniformly bounded in $X$, there is a $y\in Y$ with $d(x,y)<C$. Consequently $y\notin f^{-1}(A)$. But then
$$d(F_x,F_y)=d(F_x,f_y)\leq L\cdot d(x,y)<LC$$
and thus $F_x\notin K$ because otherwise $f_y\in A$.
\end{proof}

Let $(N^n,h)$ be another complete oriented Riemannian manifold with fundamental class $[N]\in H_n^{lf}(N;\Z)$. The mapping degree of a continuous map $f:M\rightarrow N$ is well defined if $f$ is proper or $N$ is closed and $f$ is almost proper (i.e. constant outside a compact set).\\
A fundamental property of the filling radius, which follows from Lemma \ref{lem:univ} and Lemma \ref{lem:proper} and will be used throughout this paper, is that it behaves well under distance decreasing maps of non-zero degree.

\begin{Lemma} \label{lem:fill}
If $f:(M,g)\rightarrow (N,h)$ be a distance decreasing proper (or almost proper) map with ${\rm deg}(f)\neq0$, then $\fil(M,g)\geq\fil(N,h)$.
\end{Lemma}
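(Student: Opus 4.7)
The plan is to show that $r > \fil(M,g)$ implies $\fil(N,h) \leq r$, from which the conclusion follows by taking infima. Fix such an $r$, so that $\iota_{g*}[M] = 0$ in $H_n^{lf}(U_r(\iota_g M); \mathbb{Q})$. The strategy is to construct a proper $1$-Lipschitz map $F : U_r(\iota_g M) \to U_r(\iota_h N)$ which intertwines the two Kuratowski embeddings along $f$, push the vanishing class forward along $F$, and invoke the standard degree identity to deduce the vanishing of $\iota_{h*}[N]$.

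To build $F$, I identify $M$ with its isometric image $\iota_g M \subset L^\infty(M)_b$ and observe that $\iota_h \circ f : \iota_g M \to L^\infty(N)_b$ is $1$-Lipschitz, since $f$ is distance decreasing and $\iota_h$ is isometric. By Lemma \ref{lem:univ} this extends to a $1$-Lipschitz map $F : L^\infty(M)_b \to L^\infty(N)$. Because $F(\iota_g M) \subset L^\infty(N)_b$ and the target is the affine subspace parallel to the bounded functions, every point at finite distance from $\iota_g M$ is in fact sent into $L^\infty(N)_b$; combined with the $1$-Lipschitz property this gives $F(U_r(\iota_g M)) \subset U_r(\iota_h N)$. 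Lemma \ref{lem:proper}, applied with $Y = \iota_g M$ inside $X = U_r(\iota_g M)$ (where $d(\cdot, Y) < r$ is uniformly bounded), shows that $F|_{U_r(\iota_g M)}$ is proper and therefore induces a homomorphism on locally finite homology.

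Pushing the vanishing class forward yields
\[
0 \;=\; F_* \iota_{g*}[M] \;=\; (\iota_h \circ f)_*[M] \;=\; \deg(f) \cdot \iota_{h*}[N] \;\in\; H_n^{lf}(U_r(\iota_h N); \mathbb{Q}),
\]
using the standard fact that a proper map of oriented $n$-manifolds of degree $d$ satisfies $f_*[M] = d \cdot [N]$ in locally finite homology. Since $\deg(f) \neq 0$ and the coefficients are rational, $\iota_{h*}[N] = 0$, so $\fil(N,h) \leq r$, as required.

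The almost proper case, in which $N$ is closed and $f$ is constant outside a compact set, requires only minor changes: $\iota_h N$ is then compact, the relevant locally finite homology groups reduce to singular homology, and the degree identity continues to hold for almost proper maps into closed oriented manifolds. I expect the main subtlety to be this bookkeeping around non-proper $f$ and non-compact $M$, rather than any genuinely new difficulty, as Lemma \ref{lem:proper} is tailored precisely to bridge from a Lipschitz extension to the properness needed for the pushforward on $H_n^{lf}$.
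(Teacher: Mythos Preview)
Your proof is correct and follows essentially the same approach as the paper: extend $\iota_h\circ f$ to a $1$-Lipschitz map via Lemma~\ref{lem:univ}, use Lemma~\ref{lem:proper} to obtain properness on the $r$-neighborhood, and push forward the vanishing class using the degree identity with rational coefficients. Your write-up is in fact slightly more careful than the paper's in two respects: you observe that Lemma~\ref{lem:univ} lands in $L^\infty(N)$ and then argue separately that the image of $U_r(\iota_g M)$ lies in $L^\infty(N)_b$, and you treat the almost proper case explicitly.
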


\begin{proof}
Assume that for some $R>0$ the fundamental class $\iota_{g*}[M]$ bounds a locally finite chain in $U_R(\iota_gM)\subset L^\infty(M)_b$. By Lemma \ref{lem:univ} the distance decreasing map $\iota_{g'}\circ f:M\rightarrow L^\infty(N)_b$ extends to a distance decreasing map $F:L^\infty(M)_b\rightarrow L^\infty(N)_b$, which maps $U_R(\iota_gM)$ to $U_R(\iota_{h}N)$. Since $\iota_{g'}\circ f$ is proper, $F$, when restricted to $U_R(\iota_gM)$, is proper as well (see Lemma \ref{lem:proper}) i.\ e. preimages of bounded sets are bounded. By construction, $F$ maps $\iota_{g*}[M]\in H_n^{lf}(U_R(\iota_gM);\Q)$ to ${\rm deg}(f) \iota_{h*}[N]\in H_n^{lf}(U_R(\iota_{h}N);\Q)$. Hence $\iota_{h*}[N]$ vanishes in $U_R(\iota_{h}N)\subset L^\infty(N)_b$
\end{proof}

\begin{Bemerkung}
Here it is important that we chose rational coefficients for our definition of the filling radius. For integral coefficients we would need to restrict to the case ${\rm deg}(f)=1$ in the Lemma above. 
\end{Bemerkung}

The next lemma regarding the relationship between the filling radius and the Alexandrov width is taken from \cite{Gro83}*{Appendix 1, Example in (B) and (D)}.

\begin{Lemma} \label{lem:ineq}
For a complete Riemannian manifold $(M^n,g)$ we have $\fil(M,g)\leq UR_{n-1}(M,g)$.\qed
\end{Lemma}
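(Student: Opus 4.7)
The plan is to show that if $UR_{n-1}(M,g) < R$, then $\fil(M,g) \leq R$, and conclude by taking infimum. By Lemma \ref{lem:mod}, the width hypothesis provides a locally finite open cover $\{O_i\}$ of $M$ with multiplicity $\leq n$ and radius $< R$; for each index $i$ choose a center $p_i$ with $O_i \subset B_R(p_i)$, and let $\{\psi_i\}$ be a partition of unity subordinate to $\{O_i\}$. Let $N$ be the nerve of this cover (an at most $(n-1)$-dimensional, locally finite simplicial complex) and let $\phi:M\to N$ be the associated nerve map.

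Next I would produce a filling in $L^\infty(M)_b$ by lifting the nerve combinatorially. Define $F:N\to L^\infty(M)_b$ on vertices by $F(v_i):=\iota_g(p_i)$ and extend affinely on each simplex using the convex structure of $L^\infty(M)_b$. For $x\in M$, the point $F(\phi(x))=\sum_i\psi_i(x)\iota_g(p_i)$ is a convex combination of Kuratowski images of points $p_i$ with $d_g(x,p_i)<R$ (only $i$ with $x\in O_i$ contribute), so
\begin{displaymath}
\|F(\phi(x))-\iota_g(x)\|_\infty<R,
\end{displaymath}
and the straight-line homotopy $H(x,t):=(1-t)\iota_g(x)+tF(\phi(x))$ satisfies the same bound for every $t\in[0,1]$. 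Hence both the composition $F\circ\phi$ and the homotopy take values in the open neighbourhood $U_R(\iota_gM)\subset L^\infty(M)_b$.

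Now I would verify that the relevant maps are proper so that the argument is valid in locally finite homology with rational coefficients. The nerve map $\phi$ is proper because $\phi^{-1}(\tau)$ sits inside the finite intersection $\bigcap_{v_i\in\tau}O_i$, which is bounded. Given a bounded set $K\subset U_R(\iota_gM)$ and a basepoint $y_0\in M$, the estimate $\|H(x,t)-\iota_g(x)\|_\infty<R$ together with the isometry property of $\iota_g$ yields $d_g(x,y_0)\leq d(H(x,t),\iota_g(y_0))+R$, so $H^{-1}(K)$ is bounded in $M\times[0,1]$ (and completeness of $(M,g)$ upgrades boundedness to relative compactness, which is all locally finite pushforward needs). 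The analogous estimate for $t=1$ shows $F\circ\phi$ is proper.

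Finally, since $N$ has dimension $\leq n-1$, the locally finite chain group in degree $n$ is zero, hence $\phi_*[M]=0\in H_n^{lf}(N;\Q)$, and therefore
\begin{displaymath}
(F\circ\phi)_*[M]=F_*\phi_*[M]=0\in H_n^{lf}(U_R(\iota_gM);\Q).
\end{displaymath}
The proper homotopy $H$ then shows $\iota_{g*}[M]=(F\circ\phi)_*[M]=0$ in $H_n^{lf}(U_R(\iota_gM);\Q)$, giving $\fil(M,g)\leq R$. The main delicate point in the whole plan is precisely this bookkeeping of properness for the non-compact setting; once that is handled, the vanishing is immediate from dimension reasons. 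Taking the infimum over all admissible $R$ yields $\fil(M,g)\leq UR_{n-1}(M,g)$.
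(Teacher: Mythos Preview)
The paper does not actually prove this lemma; it is stated with a \qed\ and attributed to \cite{Gro83}*{Appendix 1, (B) and (D)}. Your argument is essentially Gromov's original one---map to the nerve, then send vertices to the Kuratowski images of the ball centres and extend affinely, obtaining a proper homotopy inside $U_R(\iota_g M)$---so the approach is the standard one and is correct in outline.

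Two small points. First, the containment $\phi^{-1}(\tau)\subset\bigcap_{v_i\in\tau}O_i$ is not right for the \emph{closed} simplex (points on a proper face need not lie in every $O_i$); what you actually get, and what suffices, is $\phi^{-1}(\tau)\subset\bigcup_{v_i\in\tau}O_i$, or more simply the statement from Lemma~\ref{lem:mod} that $\phi^{-1}(\tau)$ lies in a single $O_i$. Second, to write $(F\circ\phi)_*[M]=F_*\phi_*[M]$ in locally finite homology you implicitly use that $F$ itself is proper, not just $F\circ\phi$; you checked the latter but not the former. This is easy to supply: if $F(y)$ lies in a bounded set then one vertex $v_{i_0}$ of the carrier simplex has $p_{i_0}$ within a fixed distance of the basepoint, and since adjacent centres satisfy $d_g(p_i,p_j)<2R$, all vertices of that simplex correspond to centres in a fixed ball; local finiteness of the cover and properness of $M$ then bound $F^{-1}(K)$ inside a finite subcomplex. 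With these two cosmetic fixes your proof goes through.
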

 
Finally Theorem \ref{thm:main} is related to systolic geometry, the study of the following metric invariant:

\begin{Definition}
The \emph{systole} $\sys(X,g)$ of a Riemannian polyhedron $(X,g)$ is defined to be the infimum of all lengths of noncontractible closed piecewise smooth curves in $X$.
\end{Definition}

\begin{Definition} \label{def:ess}
A connected finite $n$-dimensional simplicial complex $X$ is called \emph{essential} if the classifying map $f:X\rightarrow K(\pi_1(X),1)$ induces a homomorphism $$f_*:H_n(X;G)\rightarrow H_n(K(\pi_1(X),1);G)$$ with non-trivial image for coefficients $G=\Z$ or $\Z_2$. A closed manifold $M$ is called \emph{essential} if any smooth triangulation of $M$ produces an essential simplicial complex.
\end{Definition}

It is a central result in systolic geometry \cite[Appendix 2, (B1')]{Gro83} that for any compact essential Riemannian polyhedron $(X^n,g)$ the following, so called, \emph{isosystolic inequality} holds true:
\begin{equation}
\sys(X,g)\leq C(n)\vol(X,g)^{\frac{1}{n}},
\end{equation}
where $C(n)$ is a constant that only depends on the dimension $n$. For the special case of closed essential manifolds see \cite[Theorem 0.1.A]{Gro83}.

\subsection{Filling enlargeable manifolds}

With all invariants in play we now want to explain the notion of a \emph{filling enlargeable} manifold featured in Theorem \ref{thm:2}. It is based on the concept of an \emph{enlargeable} manifold, introduced by Gromov and Lawson \cite{Gro80}. 

\begin{Definition} \label{def:enl}
A closed orientable manifold $M^n$ is called \emph{enlargeable}, if for every Riemannian metric $g$ on $M$ and every $r>0$ there is a Riemannian covering $\overline{M}_r$ of $(M,g)$ and a distance decreasing almost proper (i.e constant outside of a compact set) map $f_r:\overline{M}_r\rightarrow S^n(r)$ to the round sphere of radius $r$ with ${\rm deg}(f_r)\neq 0$.
\end{Definition}

We consider the following notion, which combines the ideas of Definition \ref{def:enl} and the filling radius of a complete oriented Riemannian manifold:

\begin{Definition} \label{def:fill}
A closed orientable manifold $M^n$ is called \emph{filling enlargeable}, if for every Riemannian metric $g$ on $M$ and every $r>0$ there is a Riemannian covering $\overline{M}_r$ of $M$ with $\fil(\overline{M}_r,\overline{g})\geq r$, where $\overline{g}$ denotes the lifted metric.
\end{Definition}

\begin{Bemerkung} \label{rem:bilip}
To check whether or not a closed orientable manifold is (filling) enlargeable it suffices to consider one fixed metric $g$, since all Riemannian metrics on a closed manifold are in bi-Lipschitz correspondence. Furthermore we want to stress the fact that the covering spaces $\ov{M}_r$ might very well be infinite-sheeted i.\ e. non-compact (see Remark \ref{rem:proper}).
\end{Bemerkung}

We begin our discussion of this new class of large manifolds by proving that it contains all closed enlargeable and all orientable closed aspherical manifolds.

\begin{Proposition} \label{prop:enlfill}
If $(M,g)$ is a closed enlargeable manifold, then $(M,g)$ is filling enlargeable.
\end{Proposition}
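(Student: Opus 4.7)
My plan is to bootstrap filling enlargeability directly from enlargeability by composing the almost proper distance-decreasing map to a large round sphere with Lemma \ref{lem:fill}. The only non-trivial input needed is that the filling radius of the round sphere scales linearly in its radius with a positive constant.

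First, I would recall Gromov's computation (from \cite{Gro83}) that the unit round sphere $S^n(1)$ has strictly positive filling radius; call this constant $c_n := \fil(S^n(1)) > 0$. Since the filling radius is homogeneous of degree $1$ under rescaling of the metric (the Kuratowski embedding and the relevant $r$-neighborhoods rescale in the same way), the sphere of radius $\rho$ satisfies $\fil(S^n(\rho)) = c_n \rho$.

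Next, given $r>0$ for which we wish to produce a covering of filling radius $\geq r$, I would apply the enlargeability hypothesis to the radius $\rho := r/c_n$. This yields a Riemannian covering $\overline{M}_\rho \to (M,g)$ together with a distance-decreasing almost proper map $f_\rho : \overline{M}_\rho \to S^n(\rho)$ of nonzero degree. Since $S^n(\rho)$ is closed, the mapping degree is well defined and Lemma \ref{lem:fill} applies directly:
\[
\fil(\overline{M}_\rho, \overline{g}) \;\geq\; \fil(S^n(\rho)) \;=\; c_n \rho \;=\; r.
\]
Setting $\overline{M}_r := \overline{M}_\rho$ verifies Definition \ref{def:fill}. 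By Remark \ref{rem:bilip}, it suffices to do this for the fixed metric $g$.

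The only delicate point is the input about $\fil(S^n(1)) > 0$; everything else is a direct application of Lemma \ref{lem:fill}. I expect the main obstacle, if any, is simply making sure that the almost proper hypothesis in Definition \ref{def:enl} matches the hypothesis on $f$ in Lemma \ref{lem:fill} (which it does, since the target sphere is closed), and that rational coefficients in the definition of the filling radius are compatible with the degree being possibly different from $\pm 1$, as noted in the remark following Lemma \ref{lem:fill}.
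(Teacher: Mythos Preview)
Your proposal is correct and follows essentially the same approach as the paper: both use the positivity of $\fil(S^n(1))$ (the paper cites Katz \cite{Kat83} for the exact value $\tfrac{1}{2}\arccos(-\tfrac{1}{n+1})$, while you only invoke positivity via \cite{Gro83}), rescale to obtain a sphere of the appropriate radius, and then apply Lemma~\ref{lem:fill} to the distance-decreasing almost proper map provided by enlargeability. The two arguments are identical in structure and content.
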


\begin{proof}
M. Katz \cite{Kat83} proved that the filling radius of the unit sphere $S^n(1)$ is $\frac{1}{2}{\rm arccos}(-\frac{1}{n+1})$. Let $r>0$ be arbitrary and denote $r'=\frac{r}{\fil(S^n(1))}$. Since $M$ is enlargeable there is a Riemannian covering $\overline{M}_{r'}$ of $M$ and a distance decreasing almost proper map $f_r:\overline{M}_{r'}\rightarrow S^n(r')$ of nonzero degree. By Lemma \ref{lem:fill} it follows that $\fil(\overline{M}_{r'},\ov{g})\geq\fil(S^n(r'))=r$.
\end{proof}

To prove the same for orientable closed aspherical manifolds we need some more preparation.

\begin{Definition}
A proper Riemannian polyhedron $(X,g)$ is said to be \emph{uniformly contractible} if there exists a function $C:[0,\infty)\rightarrow \R_{\geq0}$ such that every ball of radius $R$ in $X$ is contractible within the ball with the same center and radius $C(R)$.
\end{Definition}

\begin{Lemma} \label{lem:geom}
Let $(X,g)$ be a contractible proper Riemannian polyhedron and assume there exists a subgroup $G$ of the isometry group that acts cocompactly on $X$. Then $(X,g)$ is uniformly contractible.
\end{Lemma}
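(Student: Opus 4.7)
The plan is to use the cocompact isometric $G$-action to reduce the statement to controlling a single ball of radius $R$ centered at a fixed base point, then to transport this control to every other center by applying an isometry from $G$.

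First I would exploit cocompactness: choose a compact subset $K \subset X$ with $G \cdot K = X$, fix a base point $q_0 \in K$, and set $D := \sup_{q \in K} d(q_0,q)$, which is finite since $K$ is compact. Then for every $q \in K$ and every $R > 0$ we have $B_R(q) \subset B_{R+D}(q_0)$, so it suffices to bound the "size" of a contraction of the single ball $B_{R+D}(q_0)$.

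Next I would invoke contractibility together with properness. Since $(X,g)$ is proper, the closed ball $\overline{B_{R+D}(q_0)}$ is compact, and because $X$ is contractible the inclusion $\overline{B_{R+D}(q_0)} \hookrightarrow X$ is null-homotopic, realised by some continuous homotopy $H:\overline{B_{R+D}(q_0)} \times [0,1] \to X$. The image of $H$ is a continuous image of a compact space, hence compact, hence contained in some metric ball $B_{C'(R)}(q_0)$. Setting $C(R) := C'(R) + D$, the restriction of $H$ to $B_R(q)\times[0,1]$ contracts $B_R(q)$ inside $B_{C'(R)}(q_0) \subset B_{C(R)}(q)$ for every $q \in K$. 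For an arbitrary $p \in X$ I would write $p = \gamma\cdot q$ with $\gamma \in G$ and $q \in K$; since $\gamma$ is an isometry, applying it to the previous contraction yields a contraction of $B_R(p)$ inside $B_{C(R)}(p)$, which is exactly the required uniform contractibility.

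The main (mild) obstacle is making the constant $C(R)$ genuinely independent of the chosen centre, and cocompactness is precisely what supplies this: it lets one contract a single universal ball and then translate the result around $X$ by isometries. The only auxiliary point to verify is that closed metric balls in a proper Riemannian polyhedron are compact, which is built into the definition of properness recalled before Definition \ref{def:wid}.
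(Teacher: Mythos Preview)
Your proposal is correct and follows essentially the same approach as the paper: reduce to a compact fundamental domain via cocompactness, use contractibility plus properness to bound the image of a contracting homotopy of one ``universal'' ball, and then transport by isometries. The only cosmetic differences are that the paper constructs the compact set $K$ explicitly as a finite union of unit balls (proving along the way that the quotient map is open) and uses a single global contraction $H:X\times[0,1]\to X$ restricted to each closed ball, whereas you assume such a $K$ directly and choose a null-homotopy for each radius; neither change affects the argument.
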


\begin{proof}
Let  $\pi:X\rightarrow X/G$ be the quotient projection and for any $x\in X$ we consider the open ball $B_1(x)$. The projection is an open map: If $U\subset X$ is open, then $$\pi^{-1}(\pi(U))=\{x\in X:\pi(x)\in \pi(U)\}=\bigcup_{g\in G}gU$$ is the union of open sets. Hence the image $\pi(B_1(x))$ is an open neighborhood of $\pi(x)\in X/G$. Since $X/G$ is compact there are finitely many points $x_1,\ldots, x_k$ in $X$ such that $X/G\subset \bigcup_{i=1}^k \pi(B_1(x_i))$. We define $K:=\bigcup_{i=1}^k B_1(x_i)$. Then for any $x\in X$ there is a $\alpha \in G$ such that $\alpha x\in K$. Furthermore $\overline{K}$ is compact.\\
Let $R>0$ be arbitrary. Since $X$ is contractible there is a homotopy $H:X\times [0,1]\rightarrow X$ connecting $id_X$ to the constant map $X\mapsto p$ where $p\in X$ is a basepoint. Without loss of generality we assume $p\in K$. Now $H(\overline{B}_{R+{\rm diam}(K)}(p)\times [0,1])$ is a compact subset of $X$ containing $p$. As compact sets are bounded $H(\overline{B}_{R+{\rm diam}(K)}(p)\times [0,1])$ is contained in a ball of radius $C(R)-{\rm diam}(K)$ around $p$ for some $R\leq C(R)<\infty$.\\
Now let $x\in X$ be arbitrary and $\alpha\in G$ such that $\alpha x\in K$. Then $\alpha(B_R(x))\subset B_{R+{\rm diam}(K)}(p)$ and thus $B_R(x)$ is contractible within $B_{C(R)-{\rm diam}(K)}(\alpha^{-1}p)\subset B_{C(R)}(x)$.
\end{proof}

\begin{Lemma} [\cite{Gro85}]\label{lem:inf}
Let $(X^n,g)$ be a complete oriented Riemannian manifold. If $X$ is uniformly contractible then $\fil(X,g)=\infty$.\qed
\end{Lemma}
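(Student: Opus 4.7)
The plan is to argue by contradiction: assume $\fil(X,g)=r<\infty$, so there is a locally finite rational $(n+1)$-chain $c$ in $U_{r+1}(\iota_g X)\subset L^\infty(X)_b$ with $\partial c=\iota_{g*}[X]$. I will use uniform contractibility to push $c$ into $X$, producing a locally finite rational $(n+1)$-chain on $X$ whose boundary is $[X]$, contradicting the fact that the fundamental class generates $H_n^{lf}(X;\Q)\cong\Q$ for a complete oriented $n$-manifold.

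First, barycentrically subdivide $c$ finitely many times so that every singular simplex in its support has $L^\infty$-diameter at most some $\delta>0$ to be fixed, and view the support of $c$ as a locally finite simplicial complex $K$ of dimension $\leq n+1$ in $L^\infty(X)_b$. I construct a proper continuous map $\pi\colon K\to X$ skeleton by skeleton. On the $0$-skeleton, for each vertex $v\in K\cap \iota_g X$ set $\pi(v)=\iota_g^{-1}(v)$, and for every other vertex $v\in K$ choose $\pi(v)\in X$ with $\|v-\iota_g\pi(v)\|_\infty<r+1$; then any two vertices of a common simplex of $K$ have images in $X$ at distance $\leq\delta+2(r+1)=:D$. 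Extend over the $k$-skeleton inductively: if the boundary of a $k$-simplex $\sigma$ has already been mapped into a metric ball of radius $C^{(k-1)}(D)$ in $X$, then by uniform contractibility this boundary contracts inside a ball of radius $C^{(k)}(D)$, giving a continuous extension to $\sigma$. The induction terminates at $k=n+1$.

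Because every simplex of $K$ is sent into a ball of uniformly bounded radius $C^{(n+1)}(D)$ around any one of its vertex images, and $K$ is locally finite, $\pi$ is proper, so the pushforward $\pi_*c\in C_{n+1}^{lf}(X;\Q)$ is well defined. Arranging, by a further application of uniform contractibility within bounded balls along $\iota_g X$, that $\pi\circ\iota_g$ is properly homotopic to $\mathrm{id}_X$, we obtain $\partial(\pi_*c)=\pi_*\iota_{g*}[X]=[X]$ in $H_n^{lf}(X;\Q)$, i.e. $[X]=0$, which is the desired contradiction. The main obstacle is keeping the extension $\pi$ proper despite the ambient space $L^\infty(X)_b$ being infinite-dimensional; this is precisely what uniform contractibility delivers, since the diameters of simplex images are controlled by the finitely iterated function $C^{(n+1)}$ in terms of $\delta$ and $r$ alone, and properness of $\pi$ is exactly what is needed to push locally finite chains forward and preserve the fundamental class.
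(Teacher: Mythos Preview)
The paper does not prove this lemma: it is stated with a terminal \qed and attributed to \cite{Gro85}. Your proposal reproduces precisely the standard argument of Gromov that lies behind that citation---use uniform contractibility to extend a nearest-point assignment on vertices, skeleton by skeleton, to a map $\pi\colon K\to X$ with uniformly bounded simplex images, push the locally finite filling chain forward along $\pi$, and obtain $[X]=0$ in $H_n^{lf}(X;\Q)$, a contradiction. So the approach is correct and is exactly the intended one.

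Two small cleanups. First, it is tidier to define $\pi$ on the entire subcomplex carrying $\partial c$ (the support of the pushed-forward fundamental cycle) to be $\iota_g^{-1}$ composed with the singular simplex maps, rather than only on vertices; then $\pi_*(\partial c)$ is literally a fundamental cycle of $X$ and the separate ``arrange $\pi\circ\iota_g\simeq\mathrm{id}_X$'' step disappears. Second, your properness claim for $\pi$ deserves one more line: a bounded set $B\subset X$ can meet $\pi(\sigma)$ only if some vertex image $\pi(v)$ lies in the $C^{(n+1)}(D)$-neighbourhood of $B$; since $\|v-\iota_g\pi(v)\|_\infty<r+1$, the vertex $v$ itself lies in a bounded subset of $L^\infty(X)_b$, and local finiteness of $c$ then bounds the number of such simplices.
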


\begin{Proposition} \label{prop:asphfill}
If $(M,g)$ is an orientable closed aspherical manifold, then $(M,g)$ is filling enlargeable.
\end{Proposition}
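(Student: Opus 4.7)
My plan is very direct: rather than choosing a different cover for each $r$, take the single cover $\overline{M}_r := \widetilde{M}$ (the universal cover) for every $r>0$, and verify that its filling radius is already infinite. The whole argument then reduces to combining Lemma~\ref{lem:geom} and Lemma~\ref{lem:inf} with standard facts about aspherical manifolds.

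First I would check that $(\widetilde{M},\widetilde{g})$ satisfies the hypotheses of Lemma~\ref{lem:geom}. Lifting the smooth triangulation of $M$ to $\widetilde M$ makes $\widetilde M$ into a Riemannian polyhedron on which $\pi_1(M)$ acts by simplicial isometries, freely and properly discontinuously, with quotient the compact polyhedron $M$. Hence the action is cocompact. Completeness of $\widetilde g$ (and therefore properness as a metric space) is immediate from compactness of $M$, since a Cauchy sequence in $\widetilde M$ projects to a convergent sequence downstairs and can be lifted back along a small uniform neighborhood. Asphericity of $M$ together with simple connectedness of $\widetilde M$ gives $\pi_k(\widetilde M)=\pi_k(M)=0$ for all $k\geq 1$, so $\widetilde M$ is contractible by Whitehead's theorem. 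Orientability of $M$ lifts to $\widetilde M$, so $\widetilde M$ carries a locally finite fundamental class.

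With the setup verified, Lemma~\ref{lem:geom} yields that $(\widetilde M,\widetilde g)$ is uniformly contractible, and Lemma~\ref{lem:inf} then gives
\begin{equation*}
\fil(\widetilde M,\widetilde g)=\infty.
\end{equation*}
In particular, $\fil(\widetilde M,\widetilde g)\geq r$ for every $r>0$, so the universal cover is a witness to filling enlargeability for every scale. This finishes the proof.

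There is essentially no obstacle here beyond checking definitions; the two nontrivial geometric inputs, the step from a cocompact isometric action on a contractible space to uniform contractibility, and the step from uniform contractibility to infinite filling radius, have both been recorded in the preceding lemmas. The only thing worth flagging is that Definition~\ref{def:fill} explicitly allows the covering to be infinite-sheeted (cf.\ Remark~\ref{rem:bilip}), which is what lets us use $\widetilde M$ directly instead of having to construct ever-larger finite covers.
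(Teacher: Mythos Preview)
Your proof is correct and follows essentially the same route as the paper's own argument: take the universal cover as the single witnessing cover, apply Lemma~\ref{lem:geom} to get uniform contractibility, and then Lemma~\ref{lem:inf} to conclude infinite filling radius. You have simply spelled out in more detail the verification that the hypotheses of Lemma~\ref{lem:geom} hold.
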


\begin{proof}
The universal cover of a closed orientable aspherical manifold is uniformly contractible by Lemma \ref{lem:geom} and, if we fix an orientation, has infinite filling radius by Lemma \ref{lem:inf}. Hence closed oriented aspherical manifolds are filling enlargeable.
\end{proof}

We remind the reader that the product of two enlargeable manifolds is enlargeable \cite{Gro80}*{Introduction}. As, later on, our proof of Theorem \ref{thm:2} will rely on a doubling procedure, where we glue two bands $M\times[0,1]$ and obtain a copy of $M\times S^1$, we investigate whether or not the same holds true for filling enlargeable manifolds.

\begin{Lemma} \label{lem:fillprod}
Let $(M^n,g)$ be a complete oriented Riemannian manifold and $(S^1,g_r)$ the standard round circle of radius $r$. If $\fil(S^1,g_r)\geq\fil(M,g)$, then $$\fil(M\times S^1, g\oplus g_r)\geq\fil(M,g).$$
\end{Lemma}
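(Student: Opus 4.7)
Set $F := \fil(M, g)$ and abbreviate $\iota := \iota_{g \oplus g_r}$. The plan is to argue by contradiction: suppose $\fil(M \times S^1, g \oplus g_r) < F$, so that for some $R$ with $R < F \leq \fil(S^1, g_r)$ there is a locally finite rational chain $c$ in $U := U_R(\iota(M \times S^1)) \subset L^\infty(M \times S^1)_b$ with $\partial c = \iota_*[M \times S^1]$. I will process $c$ into a filling of $\iota_{g*}[M]$ at scale $R$ inside $L^\infty(M)_b$, contradicting $R < F$.

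First I would build, via Lemma \ref{lem:univ}, two auxiliary $1$-Lipschitz maps: an extension $\Pi : L^\infty(M \times S^1)_b \to L^\infty(S^1)_b$ of $\iota_{g_r} \circ \pi_{S^1}$, and for each $s \in S^1$ an extension $\rho_s : L^\infty(M \times S^1)_b \to L^\infty(M)_b$ of the isometry $\iota_g \circ j_s^{-1}$ defined on the image of $j_s : M \hookrightarrow M \times \{s\} \subset M \times S^1$. Since $d(\cdot, \iota(M \times S^1))$ is uniformly bounded by $R$ on $U$, Lemma \ref{lem:proper} guarantees that both $\Pi$ and $\rho_s$ stay proper when restricted to $U$, as needed to track locally finite chains.

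The main step is a slicing of $c$ along the $S^1$-direction. Fix $s_0 \in S^1$ and consider the $1$-Lipschitz function $\phi := d_{L^\infty(S^1)_b}(\iota_{g_r}(s_0), \Pi(-))$, which restricts to $(x,s) \mapsto d_{g_r}(s_0, s)$ on the image of $\iota$. After a chain-level subdivision putting $c$ into general position with respect to $\phi$, for a generic $t \in (0, \pi r)$ I would extract a slice $\sigma_t := c \cap \phi^{-1}(t) \subset U$ satisfying
\[
\partial \sigma_t = \iota_*\bigl([M \times \{s_0^-\}] - [M \times \{s_0^+\}]\bigr),
\]
where $\{s_0^\pm\} = \partial\{s \in S^1 : d_{g_r}(s, s_0) \leq t\}$. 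Pushing $\sigma_t$ forward by $\rho_{s_0^+}$ produces a locally finite rational chain inside $U_R(\iota_g(M)) \subset L^\infty(M)_b$ whose boundary is $\iota_{g*}[M]$ plus the image of $\iota_*[M \times \{s_0^-\}]$. The second boundary term is what obstructs a genuine filling, and this is precisely where the hypothesis $\fil(S^1, g_r) \geq F$ enters: since $\iota_{g_r*}[S^1]$ does not bound in $U_R(\iota_{g_r}(S^1))$, the projection $\Pi_* c$ must ``wrap around'' $S^1$ in a controlled way, and exploiting this rigidity I expect to modify $\sigma_t$ (via the second pushforward $\rho_{s_0^-}$ and an interpolating chain in $U$) into an honest filling of $\iota_{g*}[M]$ at scale $R$.

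The main obstacle is the slicing step: making rigorous the construction of $\sigma_t$ as a locally finite rational chain in the infinite-dimensional Banach setting for a.e.\ $t$, via a chain-level transversality/subdivision argument, and bookkeeping the pushforwards $\rho_{s_0^\pm}$ so that the two slice copies of $M$ do not cancel. The use of the $S^1$-filling-radius hypothesis is precisely to rule out such cancellation; technically this will go by comparing $\Pi_*(c \cap \phi^{-1}([0,t]))$ against the non-bounding class $\iota_{g_r*}[A_t]$ inside $U_R(\iota_{g_r}(S^1))$. I would follow the co-area style arguments standard in the filling-radius literature in the spirit of \cite{Gro83} and \cite{BH10} to handle these technicalities.
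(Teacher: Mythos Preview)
Your contradiction setup and the use of Lemma~\ref{lem:univ}/\ref{lem:proper} to extend the projections match the paper. The divergence is in how to pass from an $(n{+}1)$-chain filling $[M\times S^1]$ to an $n$-chain filling $[M]$. The paper does this purely homologically: since $\iota_{g_r*}[S^1]\neq 0$ in $H_1^{lf}(U_\e(\iota_{g_r}S^1);\Q)$ and we are over a field, universal coefficients gives a dual class $[\alpha]\in H^1(U_\e(\iota_{g_r}S^1);\Q)$; pulling it back via the extended $S^1$-projection $P_2$ and taking the cap product with the bounding class $\sigma=\iota_*[M\times S^1]$ yields a boundary representing $\iota_*[M\times\{*\}]$, which one then pushes forward by the extended $M$-projection $P_1$. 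No slicing, no transversality in a Banach space, no second boundary component to manage.

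Your chain-level slicing is the geometric shadow of this cap product, and the ``second boundary term'' you flag is a genuine gap, not a technicality. First, your maps $\rho_s$ extend $\iota_g\circ j_s^{-1}$ only from the single slice $\iota(M\times\{s\})$; unlike the paper's $P_1$ (which extends $\iota_g\circ p_1$ from all of $\iota(M\times S^1)$), a $1$-Lipschitz $\rho_{s_0^+}$ need not carry $U_R(\iota(M\times S^1))$ into $U_R(\iota_g M)$, since points $\iota(m,s)$ sit at distance $d_{g_r}(s,s_0^+)$ from the slice you extended from. Second, and more seriously, if you repair this by using $P_1$ instead, then both boundary pieces $\iota_*[M\times\{s_0^\pm\}]$ push forward to the \emph{same} cycle $\iota_{g*}[M]$, so $(P_1)_*\sigma_t$ has zero boundary and you learn nothing. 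The slice simply cannot distinguish the two copies of $M$ after projection.

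Your proposed fix (``compare $\Pi_*(c\cap\phi^{-1}([0,t]))$ against $\iota_{g_r*}[A_t]$'') is not yet an argument: $A_t$ is an arc, not a cycle, and making this comparison precise forces you to produce a degree-one pairing between $H_1$ of the $S^1$-tube and something---i.e.\ to rebuild the dual cohomology class $[\alpha]$ and the cap product by hand. The missing idea is exactly the cap-product step; once you see it, the slicing machinery becomes unnecessary.
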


\begin{proof}
We assume for a contradiction that $\fil(M\times S^1, g\oplus g_r)<\fil(M,g)$. Let $\e>0$ be such that $\fil(M\times S^1, g\oplus g_r)<\e<\fil(M,g)$. Using Lemma \ref{lem:univ} we extend the projections $$p_1:M\times S^1\rightarrow M \text{ and } p_2:M\times S^1\rightarrow S^1$$ to some nonexpanding maps $$P_1:L^\infty(M\times S^1)_b\rightarrow L^\infty(M)_b \text{ and } P_2:L^\infty(M\times S^1)_b\rightarrow L^\infty(S^1).$$ By our choice of $\e$ the class $\iota_{g_r*}[S^1]$ does not vanish in the $\e$-neighborhood of $\iota_{g_r}(S^1)$ in $L^\infty(S^1)$. Since we are working with field coefficients the universal coefficient theorem tells us that there is a cohomology class $[\alpha]\in H^1(U_\e(\iota_{g_r}(S^1));\Q)$ dual to $\iota_{g_r*}[S^1]$ that extends the fundamental cohomology class of $S^1$.\\
We pull back $[\alpha]$ via $P_2$ to get a cohomology class $P_2^*[\alpha]\in H^1(U_\e(\iota_{g\oplus g_r}(M\times S^1));\Q)$. Denote by $\sigma$ the locally finite fundamental cycle $\iota_{g\oplus g_r*}[M\times S^1]$. By choice of $\e$ we know that $\sigma$ bounds a chain in its $\e$-neighbourhood. Consequently the cap product $$\sigma\cap P_2^*\alpha \in C^{lf}_n(U_\e(\iota_{g\oplus g_r}(M\times S^1));\Q)$$ is a boundary as well. By construction $\sigma\cap P_2^*\alpha$ represents the same locally finite homology class as $\iota_{g\oplus g_r}[M\times\{*\}]$. We conclude that $0=\iota_{g\oplus g_r*}[M\times\{*\}]\in H^{lf}_n(U_\e(\iota_{g\oplus g_r}(M\times S^1));\Q)$.\\
Finally $P_1$ maps $U_\e(\iota_{g\oplus g_r}(M\times S^1))$ to $U_\e(\iota_{g}(M))$ and is proper when restricted accordingly. Thus $$0=P_{1*}(\iota_{g\oplus g_r*}[M\times\{*\}])=\iota_{g*}[M]\in H^{lf}_n(U_\e(\iota_{g}(M));\Q).$$ This contradicts our assumption that $\e<\fil(M,g)$.
\end{proof}

\begin{Proposition} \label{prop:fillprod}
Let $M$ be a closed filling enlargeable manifold. Then $M\times S^1$ is filling enlargeable as well.
\end{Proposition}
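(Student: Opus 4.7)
The plan is to apply Lemma \ref{lem:fillprod} to a product covering of $M \times S^1$. By Remark \ref{rem:bilip}, it suffices to verify the defining property of filling enlargeability for a single convenient metric on $M \times S^1$; I will use the product metric $g \oplus h$, where $g$ is a fixed Riemannian metric on $M$ and $h$ is a fixed round metric on $S^1$.

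Fix $r > 0$. Since $M$ is filling enlargeable, there exists a Riemannian covering $\ov{M}_r \to (M, g)$ with $\fil(\ov{M}_r, \ov{g}) \geq r$. For every positive integer $k$, let $S^1_k \to S^1$ denote the connected $k$-fold cover; equipped with the lifted metric $\ov{h}$, it is a round circle whose circumference, and hence whose filling radius, scales linearly in $k$. Assuming first that $\fil(\ov{M}_r, \ov{g})$ is finite, choose $k$ large enough so that $\fil(S^1_k, \ov{h}) \geq \fil(\ov{M}_r, \ov{g})$. Then the product $\ov{M}_r \times S^1_k$ is a Riemannian covering of $(M \times S^1, g \oplus h)$ with the lifted metric $\ov{g} \oplus \ov{h}$, and Lemma \ref{lem:fillprod} applied to these factors yields
\[
\fil\bigl(\ov{M}_r \times S^1_k,\, \ov{g} \oplus \ov{h}\bigr) \;\geq\; \fil(\ov{M}_r, \ov{g}) \;\geq\; r,
\]
which is the required estimate.

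The main obstacle is the borderline case $\fil(\ov{M}_r, \ov{g}) = \infty$: no finite cover of $S^1$ has infinite filling radius, so the hypothesis of Lemma \ref{lem:fillprod} cannot be met directly. I would handle this by observing that the proof of Lemma \ref{lem:fillprod} actually establishes a slightly more flexible statement: whenever $\fil(N, g') \geq \rho$ and $\fil(S^1, g_s) \geq \rho$ for some $\rho > 0$, one has $\fil(N \times S^1, g' \oplus g_s) \geq \rho$. Indeed, assuming for contradiction that the product has filling radius smaller than $\rho$, one selects an intermediate value $\e$ and runs the same cap-product argument: the dual class $[\alpha] \in H^1(U_\e(\iota_{g_s}(S^1));\Q)$ still exists because $\e < \fil(S^1, g_s)$, and the resulting conclusion $\iota_{g'*}[N] = 0 \in H^{lf}_n(U_\e(\iota_{g'}(N));\Q)$ still contradicts $\e < \fil(N, g')$. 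Applying this reformulation to $\ov{M}_r$ and $S^1_k$ with $\rho = r$, and choosing $k$ large enough so that $\fil(S^1_k, \ov{h}) \geq r$, completes the argument in the infinite case as well.
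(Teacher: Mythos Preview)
Your proof is correct and follows essentially the same route as the paper: reduce to a product metric via Remark~\ref{rem:bilip}, take a product cover $\ov{M}_r \times S^1_k$, and invoke Lemma~\ref{lem:fillprod}. In fact you are slightly more careful than the paper, which simply asserts the existence of a circle cover with $\fil(S^1,g_\ell)\geq\fil(\ov{M}_r,\ov{g})$ without commenting on the possibility that $\fil(\ov{M}_r,\ov{g})=\infty$; your observation that the proof of Lemma~\ref{lem:fillprod} already yields the ``$\min$'' version (both factors $\geq\rho$ implies product $\geq\rho$) cleanly closes that gap.
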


\begin{proof}
By Remark \ref{rem:bilip} it is enough to consider the case that $M\times S^1$ is equipped with a product metric $g\oplus g_1$, where $g$ is a Riemannian metric on $M$ and $g_1$ is the standard metric on $S^1$ with radius $1$. Let $r>0$ be arbitrary.\\
On the one hand, since $M$ is filling enlargeable, there is a Riemannian covering $(\ov{M}_r,\ov{g})$ with $\fil(\ov{M}_r,\ov{g})\geq r$. On the other hand there is a radius $\ell$ such that $(S^1,g_l)$ is a Riemannian covering of $(S^1,g_1)$ and $\fil(S^1,g_l)\geq\fil(\ov{M}_r,\ov{g})$. Using Lemma \ref{lem:fillprod} we conclude that $\fil(\ov{M}_r\times S^1,\ov{g}\oplus g_\ell)\geq r.$ Since $(\ov{M}_r\times S^1,\ov{g}\oplus g_\ell)$ is a Riemannian covering of $(M\times S^1,g\oplus g_1)$ this proves the proposition.
\end{proof}

\begin{Bemerkung}
Lemma \ref{lem:fillprod} remains true if we replace $(S^1,g_r)$ with any other closed oriented Riemannian manifold $(N,h)$ such that $\fil(N,h)\geq\fil(M,g)$. This is, however, not enough to answer the general question whether the product of two filling enlargeable manifolds is filling enlargeable, as in the definition of filling enlargeability the Riemannian coverings are not required to be compact.
\end{Bemerkung}

\subsection{Width enlargeable manifolds} In the proof of our main Theorem \ref{thm:2}, it will be convenient to consider an even more general class of manifolds, which we obtain by replacing the filling radius with the Alexandrov width in the definiton of filling enlargeability:

\begin{Definition}
A closed manifold $M^n$ is called \emph{width enlargeable}, if for every Riemannian metric $g$ on $M$ and every $r>0$ there is a covering manifold $\overline{M}_r$ of $M$ with $UR_{n-1}(\overline{M_r},\overline{g})\geq r$, where $\overline{g}$ denotes the lifted metric.
\end{Definition}

\begin{Bemerkung} \label{rem:orient}
In contrast to Definition \ref{def:fill} we don't have to restrict ourselves to orientable manifolds in this case, since the definition of the Alexandrov width does not involve a fundamental class.
Any filling enlargeable manifold is also width enlargeable by Lemma \ref{lem:ineq}. Furthermore all closed aspherical manifolds, including the non-orientable ones, are width enlargeable (the universal cover has $UR_{n-1}=\infty$ by Lemma \ref{lem:inf} and Lemma \ref{lem:ineq}).\\
It is not clear to us whether a product result like Proposition \ref{prop:fillprod} also holds for width enlargeable manifolds. If so, our band width inequality Theorem \ref{thm:2} would be true for this even more general class of manifolds.
\end{Bemerkung}

The main property of width enlargeable manifolds we are interested in is:

\begin{Proposition} \label{prop:wenl}
For all $n\geq1$ there is a constant $\varepsilon_n>0$ such that the following holds. Let $M^n$ be a width enlargeable manifold and $g$ any Riemannian metric on $M$. Then for every $R>0$ there is a point $p$ in the universal cover $(\widetilde{M},\widetilde{g})$ such that $\vol(B_R(p))\geq\varepsilon_n R^n$.
\end{Proposition}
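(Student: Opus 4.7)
The plan is to combine Theorem~\ref{thm:pap}, which bounds $UR_{n-1}$ when small balls have small volume, with the definition of width enlargeability. The bridge between the two is the elementary observation that volumes of $R$-balls can only grow when one passes to a larger Riemannian covering, so a volume bound on $R$-balls in the universal cover propagates downward to every intermediate cover. Thus width enlargeability, applied at a scale larger than $R$, will force the existence of an $R$-ball in some cover whose volume exceeds $\varepsilon_n R^n$, and this lower bound then pulls back to $(\widetilde{M},\widetilde{g})$.

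Concretely, I would take $\varepsilon_n$ to be the constant supplied by Theorem~\ref{thm:pap} and, given $R>0$, invoke width enlargeability at scale $r:=R+1$ to obtain a Riemannian covering $(\overline{M}_r,\overline{g})$ with $UR_{n-1}(\overline{M}_r,\overline{g})\geq r>R$. Since $M$ is closed and $\overline{g}$ is the lifted metric, $(\overline{M}_r,\overline{g})$ is complete and locally compact, hence a proper Riemannian polyhedron in the sense of Section~2.1, so Theorem~\ref{thm:pap} applies. Its contrapositive then produces a point $\overline{p}\in\overline{M}_r$ with $\vol(B_R(\overline{p}))>\varepsilon_n R^n$.

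Next I would transfer this bound to the universal cover. The universal cover factors through $\overline{M}_r$ via a further covering projection $\pi:\widetilde{M}\to\overline{M}_r$, which is a local isometry. Choosing any lift $\widetilde{p}\in\pi^{-1}(\overline{p})$, path lifting shows that every curve from $\overline{p}$ of length less than $R$ lifts to a curve from $\widetilde{p}$ of the same length, so $\pi(B_R(\widetilde{p}))=B_R(\overline{p})$. Because a local isometry is volume non-increasing, I conclude
\[
\vol(B_R(\widetilde{p}))\geq \vol(B_R(\overline{p}))>\varepsilon_n R^n,
\]
which is the desired inequality.

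I do not anticipate a substantive obstacle: the argument is essentially bookkeeping. The two points that require attention are checking that the possibly infinite-sheeted cover $\overline{M}_r$ is a proper polyhedron (so Theorem~\ref{thm:pap} applies) and that the covering projection $\pi$ is surjective on $R$-balls centered at a lift (so the volume estimate transfers in the correct direction). Both follow from standard properties of Riemannian coverings of complete polyhedra, and both were already used implicitly in Section~2.1 (cf.\ Remark~\ref{rem:proper} and the discussion surrounding Lemma~\ref{lem:fill}).
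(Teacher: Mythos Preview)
Your proposal is correct and takes essentially the same approach as the paper: both combine Theorem~\ref{thm:pap} with the definition of width enlargeability via the observation that $R$-ball volumes in any intermediate cover are dominated by those in the universal cover. The only cosmetic difference is that the paper phrases it as a proof by contradiction (assume all $R$-balls in $\widetilde{M}$ are small, deduce $UR_{n-1}\leq R$ for every cover), whereas you argue directly via the contrapositive of Theorem~\ref{thm:pap}; your explicit choice $r=R+1$ is in fact slightly cleaner than the paper's, which tacitly needs a strict inequality.
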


\begin{proof} 
Let $\e_n$ be the constant from Theorem \ref{thm:pap}. Assume for a contradiction, that there is a radius $R>0$ such that the volume of all balls of radius $R$ in $(\ti{M},\ti{g})$ is bounded from above by $\varepsilon_n R^n$. Since $M$ is width-enlargeable there is a covering $\overline{M}_R$ with $UR_{n-1}(\overline{M}_r,\overline{g})>R$. On the other hand the volume of a ball of radius $R$ centered at a point $p$ in $(\overline{M}_r,\overline{g})$ is bounded from above by the volume of the $R$-ball around any lift $\ti{p}$ of $p$ in the universal cover $(\ti{M},\ti{g})$, which is smaller than $\varepsilon_n R^n$ by assumption. Since $(\overline{M}_r,\overline{g})$ is a complete locally compact path metric space it is proper by the Hopf-Rinow Theorem and  hence $UR_{n-1}(\overline{M}_r,\overline{g})\leq R$ by Theorem \ref{thm:pap}, which is a contradiction.
\end{proof}

We spend the rest of this section proving that for closed width enlargeable manifolds the isosystolic inequality (3) holds true. Using \cite{bab92}*{Corollary 8.3} we conclude that orientable closed width enlargeable manifolds are essential.\\
This result is not necessary to prove Theorem \ref{thm:2}, but it is interesting to see how our newly introduced classes fit into the hierarchy of large manifolds. For closed orientable manifolds we get:
\begin{displaymath}
\text{Aspherical, Enlargeable }\subseteq\text{ Filling enlargeable }\subseteq\text{ Width enlargeable }\subseteq\text{ Essential }
\end{displaymath}

\begin{Lemma}\label{lem:lift}
Let $(M,g)$ be a Riemannian manifold and $(\overline{M},\overline{g})$ be a Riemannian cover. Let $B_R(p)$ be a ball with the property that the inclusion homomorphism $\pi_1(B_R(p))\rightarrow \pi_1(M)$ is trivial. Then $B_R(p)$ lifts to a collection of disjoint open sets in $\overline{M}$, each of which is the ball of radius $R$ around some lift $p'$ of $p$.
\end{Lemma}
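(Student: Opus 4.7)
The plan is to apply the covering space lifting criterion to the inclusion $B_R(p)\hookrightarrow M$. The hypothesis states precisely that this inclusion induces the trivial map on $\pi_1$, and the ball $B_R(p)$ is path-connected and locally path-connected: any $q$ with $d_g(p,q)<R$ can be joined to $p$ by a curve of length $<R$, which then lies entirely in $B_R(p)$ since every point of the curve is within distance $<R$ of $p$. Hence, for each lift $p'\in\pi^{-1}(p)$ under the covering projection $\pi\colon\overline{M}\to M$, there is a unique continuous lift $s_{p'}\colon B_R(p)\to\overline{M}$ of the inclusion with $s_{p'}(p)=p'$.

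Next I would show that $s_{p'}(B_R(p))=B_R(p')$. Since $\pi$ is a local isometry and $\pi\circ s_{p'}$ is the inclusion, the map $s_{p'}$ preserves the length of piecewise smooth curves. Given $q\in B_R(p)$ and $\varepsilon>0$ with $d_g(p,q)+\varepsilon<R$, choose a curve $\gamma$ in $M$ from $p$ to $q$ of length less than $d_g(p,q)+\varepsilon$; as noted, $\gamma$ stays in $B_R(p)$, so the lift $s_{p'}\circ\gamma$ has the same length, giving $d_{\overline{g}}(p',s_{p'}(q))<d_g(p,q)+\varepsilon$. Letting $\varepsilon\to 0$ shows $s_{p'}(q)\in B_R(p')$. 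Conversely, given $q'\in B_R(p')$, pick a curve $\gamma'$ in $\overline{M}$ from $p'$ to $q'$ of length $<R$. Its projection $\pi\circ\gamma'$ lies in $B_R(p)$ for the same reason, and the unique path-lifting property forces $\gamma'=s_{p'}\circ(\pi\circ\gamma')$, so $q'\in s_{p'}(B_R(p))$.

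Finally I would verify disjointness and that the union $\bigsqcup_{p'\in\pi^{-1}(p)}B_R(p')$ exhausts $\pi^{-1}(B_R(p))$. If $q'\in B_R(p_1')\cap B_R(p_2')$, then $s_{p_1'}$ and $s_{p_2'}$ are two continuous lifts of the inclusion agreeing at $q=\pi(q')$; uniqueness forces $s_{p_1'}=s_{p_2'}$, whence $p_1'=p_2'$. For exhaustiveness, any $q'\in\pi^{-1}(B_R(p))$ admits, by the lifting criterion applied with basepoint $q=\pi(q')$, a lift $t\colon B_R(p)\to\overline{M}$ with $t(q)=q'$; setting $p'=t(p)\in\pi^{-1}(p)$, uniqueness identifies $t$ with $s_{p'}$, so $q'=s_{p'}(q)\in B_R(p')$.

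The only delicate point is the twofold length comparison in the middle paragraph: both inclusions rely on the elementary observation that a curve emanating from $p$ (respectively $p'$) of length strictly less than $R$ cannot escape the open ball of radius $R$. Everything else is routine covering-space theory combined with length-preservation of path lifts under the local isometry $\pi$.
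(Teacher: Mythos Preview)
Your proof is correct and follows essentially the same approach as the paper: invoke the lifting criterion using the triviality of $\pi_1(B_R(p))\to\pi_1(M)$, compare lengths via the local isometry $\pi$ to identify $s_{p'}(B_R(p))$ with $B_R(p')$, and argue disjointness from uniqueness of lifts. The only cosmetic differences are that the paper phrases disjointness in terms of a hypothetical noncontractible loop rather than uniqueness of lifts, and that you spell out exhaustiveness of $\pi^{-1}(B_R(p))$ explicitly.
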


\begin{proof}
Choose a preimage $p'$ of $p$ under the covering projection. Since $\pi_1(B_R(p))\rightarrow \pi_1(M)$ is trivial, there is a unique lift $f':B_R(p)\rightarrow (\overline{M},\overline{g})$ with $f'(p)=p'$. Let $x\in B_R(p)$ be arbitrary. There is a path $\gamma:[0,1]\rightarrow M$ with $\gamma(0)=p$, $\gamma(1)=x$ and $\len(\gamma)<R$. Now $\gamma$ lifts to a path $f'\circ\gamma$ connecting $p'$ and $f'(x)$ and $\len(f'\circ\gamma)=\len(\gamma)<R$. Thus $f'(B_R(p))$ is contained in $B_R(p')$. Let $x'\in B_R(p')$ be arbitrary. There is a path $\sigma:[0,1]\rightarrow \overline{M}$ with $\sigma(0)=p'$ and $\sigma(1)=x'$ and $\len(\sigma)<R$. But then $\pi\circ\sigma$ is a path in $M$ connecting $p$ and $\pi(x')$ with the property $\len(\pi\circ\sigma)=\len(\sigma)<R$. Thus $\pi(B_R(p'))\subset B_R(p)$. Furthermore $\pi$ is injective on $B_R(p')$. If it were not then $B_R(p)$ would contain a loop that is not contractible in $M$. Together with the fact that $\pi\circ f'$ is the identity on $B_R(p)$ this implies that both inclusions $f'(B_R(p))\subset B_R(p')$ and $\pi(B_R(p'))\subset B_R(p)$ are in fact equalities.\\
If $p''$ is a different preimage of $p$ and $f'':B_R(p)\rightarrow(\overline{M},\overline{g})$ the respective lift of $B_R(p)$ with $f''(p)=p''$ then $f(B_R(p))\cap f''(B_R(p))=\emptyset$. If not then $p'$ and $p''$ could be joined by a path within $\pi^{-1}(B_R(p))$ which would project to a noncontractible loop in $B_R(p)$.
\end{proof}

\begin{Proposition}\label{prop:lift2}
Let $(M,g)$ be a Riemannian manifold and assume that $UR_k(M,g)<\frac{1}{2}\sys(M,g)$. If $(\overline{M},\overline{g})$ is any Riemannian cover of $(M,g)$, then
\begin{displaymath}
UR_k(\overline{M},\overline{g})\leq UR_k(M,g)
\end{displaymath}
\end{Proposition}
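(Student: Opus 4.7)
My strategy is to pass to the open-cover formulation of the Alexandrov width supplied by Lemma \ref{lem:mod}, lift a suitable cover of $M$ to $\overline{M}$ via Lemma \ref{lem:lift}, and verify that the lifted family still satisfies the required radius and multiplicity bounds. Concretely, pick any $R$ with $UR_k(M,g)<R<\tfrac{1}{2}\sys(M,g)$; such an $R$ exists by hypothesis, and Lemma \ref{lem:mod} produces a locally finite open cover $\{O_i\}$ of $M$ of multiplicity at most $k+1$ with each $O_i\subset B_{r_i}(p_i)$ for some $r_i<R$.

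The auxiliary observation I would establish first is that for every $r<\tfrac{1}{2}\sys(M,g)$ and every $p\in M$ the inclusion $\pi_1(B_r(p))\to\pi_1(M)$ is trivial; only then is Lemma \ref{lem:lift} applicable to the sets $O_i$. The ball $B_r(p)$ is path-connected, as every curve of length $<r$ issuing from $p$ remains inside it, so it suffices to null-homotope rectifiable based loops $\gamma$ at $p$. I would partition the parameter interval finely enough that each sub-arc of $\gamma$ has length smaller than the positive quantity $\sys(M,g)-2r$, and close each sub-arc off through $p$ by a path of length $<r$ in $B_r(p)$; this exhibits $\gamma$ as a concatenation of based loops of length $<\sys(M,g)$, each contractible in $M$. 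Lemma \ref{lem:lift} then decomposes every preimage $\pi^{-1}(O_i)$ as a disjoint union $\bigsqcup_j O_i^{(j)}$, where each $O_i^{(j)}$ lies in the lifted ball $B_{r_i}(p_i^{(j)})$, hence has radius $<R$.

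That $\{O_i^{(j)}\}$ covers $\overline{M}$ and has multiplicity $\leq k+1$ is then formal: for fixed $i$ the sets $O_i^{(j)}$ are pairwise disjoint, so any $m$-fold intersection upstairs projects to an $m$-fold intersection of distinct $O_i$ downstairs. The step that genuinely requires care, and which I expect to be the main obstacle, is checking \emph{local finiteness} of the lifted family. For $\overline{x}\in\overline{M}$ I would choose $\delta<\tfrac{1}{2}\sys(M,g)$ small enough that $B_\delta(\overline{x})$ projects isometrically onto $B_\delta(\pi(\overline{x}))$: only finitely many $O_i$ meet the latter by local finiteness downstairs, and for each such $i$ any lift $p_i^{(j)}$ whose $O_i^{(j)}$ intersects $B_\delta(\overline{x})$ must satisfy $d(\overline{x},p_i^{(j)})<\delta+r_i$. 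Properness of $(\overline{M},\overline{g})$ together with the properly discontinuous action of the deck transformation group by isometries confines the relevant lifts to a compact set that meets each fiber in finitely many points. A second application of Lemma \ref{lem:mod} now yields $UR_k(\overline{M},\overline{g})<R$, and letting $R$ decrease to $UR_k(M,g)$ gives the claimed inequality.
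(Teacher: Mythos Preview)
Your proposal is correct and follows essentially the same approach as the paper: lift a small-radius, low-multiplicity open cover via Lemma \ref{lem:lift} after verifying triviality of $\pi_1(B_r(p))\to\pi_1(M)$ by the thin-triangle argument, then check that radius and multiplicity survive the lift. You are in fact more careful than the paper in one respect: you explicitly address local finiteness of the lifted cover (needed to re-invoke Lemma \ref{lem:mod}), whereas the paper's proof passes over this point silently; your argument there is sound, though the appeal to the deck transformation group is unnecessary---discreteness of each fiber together with properness of $(\overline{M},\overline{g})$ already forces any compact ball to meet $\pi^{-1}(p_i)$ in finitely many points.
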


\begin{proof}
Denote $UR_k(M,g)=r$ and let $(U_i)_{i\in\mathcal{I}}$ be an open cover of radius $\leq r+\delta$ and multiplicity $\leq k+1$, where $\delta<\frac{1}{3}(\sys(M,g)-2r)$. For a fixed $i\in\mathcal{I}$ there is a point $p_i\in M$ such that $U_i\subset B_{r+\delta}(p_i)$. We claim that the inclusion homomorphism $\pi_1(B_{r+\delta}(p_i))\rightarrow\pi_1(M)$ is trivial. Let $\gamma$ be a loop in $B_{r+\delta}(p_i)$.  There is a subdivision $0=t_0<\ldots<t_\ell=1$ of the unit interval such that $d_g(\gamma(t_i),\gamma(t_{i+1}))<\delta$. We can connect each $\gamma(t_i)$ to $p$ by a minimizing geodesic $\sigma_i$ of length $<R+\delta$ and thus $\gamma$ is homotopic to the concatenation of $\ell$ 'thin' triangles $\sigma_i(1-t)\cdot \gamma|_{[t_i,t_{i+1}]}\cdot \sigma_i$. Each of these triangles is a loop based at $p$ of length less than $\sys(X,g)$ and thus contractible, which implies that $\gamma$ is contractible as well.\\
Lemma \ref{lem:lift} the ball $B_{r+\delta}(p_i)$ lifts to a collection of disjoint $(r+\delta)$-balls around the preimages of $p_i$. Thus $U_i$ also lifts to a collection of disjoint open sets in $\overline{M}$, each of which is contained in a ball of radius $r+\delta$. If we do this for each $i\in\mathcal{I}$ we produce an open cover of $(\overline{M},\overline{g})$ with radius $\leq r+\delta$. The multiplicity of this cover is still $\leq k+1$. To see this assume for a contradiction, that a point $x\in\overline{M}$ is contained in $k+2$ open sets $U_1,\ldots,U_{k+2}$ of the cover. Then $\pi(x)$ is contained in the sets $\pi(U_1),\ldots,\pi(U_{k+2})$. But each of these sets $\pi(U_j)$ corresponds to an open set in the cover of $M$. Since only $k+1$ of these sets can contain $x$ we can assume without loss of generality that $\pi(U_1)=\pi(U_2)=U$. Let $p_1\in U_1$ and $p_2\in U_2$ be such that $\pi(p_1)=\pi(p_2)$. Since $U_1\cap U_2\neq\emptyset$, it follows that $U_1=U_2$.
\end{proof}

\begin{Corollary} \label{cor:widsys}
If a closed manifold $M^n$ is width enlargeable, then $\frac{1}{2}\sys(M,g)\leq UR_{n-1}(M,g)$ for any metric $g$ on $M$.\qed
\end{Corollary}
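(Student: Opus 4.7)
The plan is to argue by contradiction using the width enlargeability of $M$ together with the covering inequality of Proposition \ref{prop:lift2}. Suppose, aiming for a contradiction, that $UR_{n-1}(M,g) < \tfrac{1}{2}\sys(M,g)$. Then set $r := UR_{n-1}(M,g)$, and observe that the hypothesis of Proposition \ref{prop:lift2} is satisfied for $k = n-1$: we have $UR_{n-1}(M,g) < \tfrac{1}{2}\sys(M,g)$.

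First I would invoke Proposition \ref{prop:lift2} to conclude that for every Riemannian covering $(\overline{M},\overline{g})$ of $(M,g)$, the upper bound
\begin{displaymath}
UR_{n-1}(\overline{M},\overline{g}) \leq UR_{n-1}(M,g) = r
\end{displaymath}
holds. The crucial feature here is that this bound is uniform in the choice of cover, whereas width enlargeability is meant to produce covers whose Alexandrov width is arbitrarily large.

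Next I would use the assumption that $M$ is width enlargeable to choose any radius $R > r$ and obtain a cover $\overline{M}_R$ of $M$ with $UR_{n-1}(\overline{M}_R,\overline{g}) \geq R > r$. This directly contradicts the uniform bound derived above. Hence the original assumption must have been false, and $\tfrac{1}{2}\sys(M,g) \leq UR_{n-1}(M,g)$.

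The argument is short and has no real obstacle, since all the work is hidden in Proposition \ref{prop:lift2}; the only subtlety worth double-checking is that the definition of width enlargeability genuinely allows $R$ to be chosen arbitrarily large, which is indeed the case since $r > 0$ is arbitrary in Definition of width enlargeable manifolds. Thus the contradiction goes through unconditionally as soon as $\sys(M,g) > 2\,UR_{n-1}(M,g)$ is assumed.
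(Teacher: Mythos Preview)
Your proof is correct and is exactly the argument the paper has in mind: the corollary is stated with a \qed and no explicit proof because it follows immediately from Proposition~\ref{prop:lift2} by the contradiction you describe.
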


\begin{Proposition}
Let $M^n$ be a closed width enlargeable manifold and $g$ be a Riemannian metric on $M$. Then:
$$\sys(M,g)\leq C(n)\vol(M,g)^{1/n},$$ where $C(n)$ is a constant that only depends on the dimension $n$.
\end{Proposition}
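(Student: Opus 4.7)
The plan is to combine Theorem \ref{thm:pap} with Corollary \ref{cor:widsys} in a direct two-step argument. The key observation that makes this work is trivial: since $M$ is compact, every metric ball in $(M,g)$ has volume at most the total volume $\vol(M,g)$. This lets us feed volume bounds into Theorem \ref{thm:pap} to control the Alexandrov width of $M$ itself, and Corollary \ref{cor:widsys} then transfers this control into a bound on the systole.

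More concretely, first I would let $\varepsilon_n > 0$ be the constant from Theorem \ref{thm:pap} and set
\begin{equation*}
R := \left(\frac{\vol(M,g)}{\varepsilon_n}\right)^{1/n}.
\end{equation*}
For this choice every ball $B_R(p) \subset M$ satisfies $\vol(B_R(p)) \leq \vol(M,g) = \varepsilon_n R^n$, so the hypothesis of Theorem \ref{thm:pap} is fulfilled and we deduce $UR_{n-1}(M,g) \leq R$.

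Second, since $M$ is width enlargeable by assumption, Corollary \ref{cor:widsys} yields $\tfrac{1}{2}\sys(M,g) \leq UR_{n-1}(M,g)$. Chaining the two inequalities gives
\begin{equation*}
\sys(M,g) \;\leq\; 2\,UR_{n-1}(M,g) \;\leq\; 2R \;=\; 2\varepsilon_n^{-1/n}\,\vol(M,g)^{1/n},
\end{equation*}
so the conclusion holds with $C(n) = 2\varepsilon_n^{-1/n}$, a constant depending only on $n$.

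There is really no serious obstacle here: all the heavy lifting is done in Theorem \ref{thm:pap} (the volume--width inequality of Guth--Papasoglu--Nabutovsky) and in Corollary \ref{cor:widsys} (the lift argument comparing $UR_{n-1}$ of $M$ with that of its covers, using that width enlargeability forces the covers to have arbitrarily large Alexandrov width). The only thing to verify carefully is the choice of radius $R$, where using the ambient volume as an a priori upper bound for ball volumes is exactly what makes Theorem \ref{thm:pap} applicable directly on $M$ rather than on a cover.
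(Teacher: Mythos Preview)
Your proposal is correct and follows essentially the same argument as the paper: choose $R=\varepsilon_n^{-1/n}\vol(M,g)^{1/n}$ so that the ball-volume hypothesis of Theorem~\ref{thm:pap} is trivially satisfied, obtain $UR_{n-1}(M,g)\leq R$, and then apply Corollary~\ref{cor:widsys} to conclude $\sys(M,g)\leq 2R$ with the same constant $C(n)=2\varepsilon_n^{-1/n}$.
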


\begin{proof}
If $M$ is width enlargeable and $g$ is a metric on $M$ then $\sys(M,g)\leq 2UR_{n-1}(M,g)$ by Corollary \ref{cor:widsys}. If we take the radius $R$ to be $\e_n^{-\frac{1}{n}}\vol(M,g)^{\frac{1}{n}}$ in Theorem \ref{thm:pap}, then the assumption automatically holds true and we conclude that $UR_{n-1}(M,g)\leq \e^{-\frac{1}{n}}\vol(M,g)^{1/n}$. Thus the isosystolic inequality holds with $C(n)=2\e_n^{-\frac{1}{n}}$.
\end{proof}

\begin{Bemerkung}\label{rem:zess}
It follows directly from \cite{bab92}*{Corollary 8.3} that an orientable closed width enlargeable manifold is essential.
\end{Bemerkung}

\section{Proof of the main results}

Let $M^{n-1}$ be a manifold and $g$ a Riemannian metric on $V:=M\times [0,1]$. The key idea in our proof of Theorem \ref{thm:2} is to construct a Riemannian polyhedron $(D,g_d)$ from a Riemannian band $(V,g)$ by taking its \emph{metric double}, which is constructed like this:\\
We fix a smooth triangulation of $V$.
Let $(V_1,g_1)$ be the Riemannian polyhedron obtained from $(V,g)$ via this triangulation. Let $V_2:=M\times[-1,0]$ and $g_2$ be the pullback metric under the diffeomorphism $s:V_2\rightarrow V_1$ $(x,y)\mapsto (x,-y)$. Since $s$ is a diffeomorphism we can also pull back the smooth triangulation from $V_1$ to $V_2$ via $s$, giving $(V_2,g_2)$ the structure of a Riemannian polyhedron.\\
In order to get $(D,g_d)$ we take the disjoint union of $(V_1,g_1)$ and $(V_2,g_2)$, and glue them together along their (isometric) boundaries i.e. $M\times\{-1\}\sim_{s} M\times\{1\}$ and $M\times\{0\}\sim_{id} M\times\{0\}$. The result is a simplicial complex $D$. Every simplex of $D$ is a proper subset of the subcomplexes $V_1$ or $V_2$ (here we identify $V_1$ and $V_2$ with their images under the quotient map from their disjoint union to $D$). Thus we can define a Riemannian metric $g_d$ on $D$ by setting $g_d|_\tau=g_i|_\tau$ for any simplex $\tau$ in $D$ depending on whether $\tau$ is a subset of $V_1$ or $V_2$.\\
To get acquainted with the notion of the double we establish the following:

\begin{Proposition} \label{prop:sys}
Let $\gamma$ be a closed noncontractible piecewise smooth curve in $(D,g_d)$. Then either $\len(\gamma)\geq2\wid(V,g)$ or there is a closed noncontractible piecewise smooth curve $\tilde{\gamma}$ in $(V,g)$ with $\len(\tilde{\gamma})=\len(\gamma)$.
\end{Proposition}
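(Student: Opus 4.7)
The plan is to introduce a ``folding'' map $\pi:D\to V$ that equals the identity on $V_1$ and the reflection $s$ on $V_2$. The two gluings defining $D$ (along $M\times\{0\}$ via the identity and along $M\times\{\pm 1\}$ via $s$) are precisely what is needed for $\pi$ to be well-defined and continuous. Since $s$ is an isometry by construction of $g_2$, the map $\pi$ restricts to an isometry on each of $V_1$ and $V_2$, so for any piecewise smooth curve $\beta$ in $D$ one has $\len(\pi\circ\beta)=\len(\beta)$.

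First I would form the closed piecewise smooth curve $\pi\circ\gamma$ in $V$, noting $\len(\pi\circ\gamma)=\len(\gamma)$. If $\pi\circ\gamma$ is noncontractible in $V$ I simply set $\tilde\gamma:=\pi\circ\gamma$ and the second alternative of the conclusion holds. So from now on I assume $\pi\circ\gamma$ is contractible in $V$.

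In that case I would exploit the topological picture $D\cong M\times S^1$, with $S^1=[-1,1]/(-1\sim 1)$, under which $\pi$ becomes the branched double cover $(x,y)\mapsto(x,|y|)$. The induced map $\pi_*:\pi_1(D)=\pi_1(M)\times\Z\to\pi_1(V)=\pi_1(M)$ is then projection onto the first factor (the $S^1$-loop at a point of $M$ projects to an interval traversed twice, hence is null-homotopic in $V$). The hypotheses $[\gamma]\neq 1$ and $\pi_*[\gamma]=1$ force the $\Z$-component of $[\gamma]$ to be nonzero, so the composition $h\circ\gamma$ with the height function $h:D\to S^1$ has nonzero degree and is therefore surjective. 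In particular $\gamma$ visits both the ``middle'' $M\times\{0\}$ and the ``outer'' $M\times\{\pm 1\}$. Choosing parameters $t_0,t_1\in S^1$ with $\gamma(t_0)\in M\times\{0\}$ and $\gamma(t_1)\in M\times\{\pm 1\}$ cuts the parameter circle into two arcs of $\gamma$, each joining $M\times\{0\}$ to $M\times\{\pm 1\}$ inside $D$; projecting such an arc by $\pi$ yields a curve in $V$ from $M\times\{0\}$ to $M\times\{1\}$ of equal length, so each arc has length at least $\wid(V,g)$. Summing over the two arcs gives $\len(\gamma)\geq 2\wid(V,g)$.

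The step I expect to require the most care is the identification of $\pi_*$ on fundamental groups; this is most transparent from the branched cover model $(x,y)\mapsto(x,|y|)$, or alternatively from the deformation retraction of $V$ onto $M\times\{1/2\}$ together with the obvious factorization. Everything else reduces to the length-preserving property of $\pi$ and the definition of $\wid$.
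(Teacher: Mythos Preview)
Your proof is correct and follows a somewhat different route from the paper's. Both arguments hinge on the same folding map (the paper denotes it $r:D\to V_1$), which is length-preserving on curves; the difference lies in how the dichotomy is established. The paper works locally: it subdivides $\gamma$ at its successive crossings of $\partial V_1=M_0\sqcup M_{\pm1}$, observes that if two consecutive crossing points lie in different components then the two complementary arcs of $\gamma$ each join $M_0$ to $M_{\pm1}$ (giving $\len(\gamma)\geq 2\wid(V,g)$ immediately), and otherwise writes down explicit straight-line homotopies in the $[-1,1]$-coordinate between each $V_2$-arc and its reflection, concatenating these to show $\gamma\simeq r(\gamma)$ in $D$, hence $r(\gamma)$ is noncontractible. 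Your approach is global: you pass directly to $\pi_*$ on fundamental groups and use a degree/surjectivity argument on the height map to force $\gamma$ to meet both $M_0$ and $M_{\pm1}$ in the contractible-projection case. This sidesteps the case analysis and the explicit homotopies entirely, at the cost of invoking the product decomposition $\pi_1(D)\cong\pi_1(M)\times\Z$; the paper's argument stays purely at the level of curves and is in that sense more self-contained. Both reach the conclusion with comparable effort; yours is the more conceptual of the two.
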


\begin{proof}
As above, consider $V_1$ and $V_2$ as subsets of $D$. Their intersection is the disjoint union of two copies of $M$, call them $M_0$ and $M_{\pm1}$. Without loss of generality we assume $\gamma(0)=\gamma(1)\in V_1$. There is a partition $0\leq t_1<\cdots<t_{2k}\leq 1$ of $[0,1]$ ($k$ might of course be 0), such that the following holds: $\gamma(t_i)\in M_0\cup M_{\pm1}$ and $\gamma([t_{2i-1},t_{2i}])\subset V_2$ while $\gamma([t_{2i},t_{2i+1}])\subset V_1$. Furthermore $\gamma([0,t_{1}])\subset V_1$ and $\gamma([t_{2k},1])\subset V_1$.\\
Of course if $t_{i}\in M_0$ and $t_{i+1}\in M_{\pm1}$ (or the other way round) for some index $i=1,\ldots,2k-1$ then $\gamma([t_{i},t_{i+1}])$ as well as $\gamma([t_{i+1},1])\cdot\gamma([0,t_{i}])$ are curves that connect $M_0$ and $M_{\pm1}$. It follows that $\len(\gamma)\geq2\cdot\wid(V,g)$.\\
Thus we assume that for all $i$ we have $\gamma(t_i)\in M_0$. Denote by $r:D\rightarrow V_1$ the map which is the identity on $V_1$ and on $V_2=M\times[-1,0]$ has the form $(x,y)\mapsto(x,-y)$ (where we consider $D$ as $M\times[-1,1]/_\sim$). This is a continuous retraction, preserving the length of curves. Let $i=1,\ldots,k$ and take the curve $c:=\gamma([t_{2i-1},t_{2i}])\subset V_2$. We claim that $c$ is homotopic to $r(c)\in V_1$ with fixed end points.\\
Let $c=(c_1,c_2)$. By assumption $c_2(t_{2i-1})=0=c_2(t_{2i})$. Furthermore $c_2(t)\in[-1,0]$ as $c\subset V_2$. Now 
\begin{displaymath}
H_1:[t_{2i-1},t_{2i}]\times[0,1]\rightarrow D \ \ (t,s)\mapsto (c_1(t),(1-s)c_2(t))
\end{displaymath}
is a homotopy between $c$ and the curve $(c_1(t),0)\subset M_0$. In the same way
\begin{displaymath}
H_2:[t_{2i-1},t_{2i}]\times[0,1]\rightarrow D \ \ (t,s)\mapsto (c_1(t),(s-1)c_2(t))
\end{displaymath}
is a homotopy between $(c_1,-c_2)=r(c)$ and $(c_1(t),0)\subset M_0$. As the endpoints are fixed in both $H_1$ and $H_2$, we can concatenate them to get a homotopy between $c$ and $r(c)$.\\
Finally, $\tilde{\gamma}:=r(\gamma)=\gamma([0,t_1])\cdot r(\gamma([t_1,t_2]))\cdot \gamma([t_2,t_3])\cdots r(\gamma([t_{2k-1},t_{2k}]))\cdot\gamma([t_{2k},1])$ is a closed curve in $V_1$ homotopic to $\gamma$ and therefore noncontractible. As $r$ preserves the length of curves, we get $\len(\tilde{\gamma})=\len(\gamma)$ and since we can view $(V_1,g_d|_{V_1})\subset (D,g_d)$ as a copy of $(V,g)$, this proves the lemma.
\end{proof}

Now we have all the necessary tools to prove our main Theorem \ref{thm:2}:

\begin{proof} [Proof of Theorem \ref{thm:2}]
Let $\varepsilon_n$ be the constant from Theorem \ref{thm:pap}. Denote by $(\ti{V},\ti{g})$ the universal cover of $(V,g)$ and let $(D,g_d)$ be the metric double of $(\ti{V},\ti{g})$. We claim that the volume of every unit ball in $(D,g_d)$ is bounded from above by $\varepsilon_n$:\\
To see this let $p_1\in \ti{V_1}$ be arbitrary and $p_2$ be the mirror image of $p_1$ in $\ti{V_2}$ (here $\ti{V_1}$ and $\ti{V_2}$ are the two copies of $\ti{V}$ used in the doubling procedure). If we denote by $q:\ti{V_1}\coprod \ti{V_2}\rightarrow D$ the quotient projection, it turns out that $B_R(q(p_1))\subset q(B_R(p_1)\cup B_R(p_2))$. In fact let $x\in B_R(q(p_1))$ i.\ e.\ there is a path $\sigma$ of length less than $R$ connecting $q(p)$ and $x$ in $D$. Now if $x\in q(\ti{V_1})$, then, using the same techniques as in the proof of Proposition \ref{prop:sys}, $\sigma$ can be modified to a path of the same length that connects $q(p_1)$ and $x$ in $q(\ti{V_1})$. Hence $x\in q(B_R(p_1))$. If, on the other hand, $x\in q(\ti{V_2})$, the $\sigma$ can be modified to a path of the same length that connects $q(p_2)$ and $x$ in $q(\ti{V_2})$.\\ 
Assume that $\frac{1}{2}\sys(D,g_d)=\wid(V,g)>1$. Let $p$ be an arbitrary point in the universal cover $(\ti{D},\ti{g}_d)$. If $\pi:\ti{D}\rightarrow D$ denotes the covering projection, then $\pi$ is injective on $B_1(p)$ as two points $p_1$ and $p_2$ in $\ti{D}$ with $p_1\neq p_2$ and $\pi(p_1)=\pi(p_2)$ have $\dist(p_1,p_2)\geq\sys(D,g_d)>2$ and any two points in $B_1(p)$ are of distance less than 2 from each other. It follows that $\pi(B_1(p))$ is isometric to $B_1(\pi(p))$ and thus $\vol(B_1(p))<\varepsilon_n$.\\
Now $(\ti{D},\ti{g}_d)$ is also the universal cover of the metric double of $(V,g)$, which, as a manifold, is just $M\times S^1$ and hence it is width enlargeable by Proposition \ref{prop:fillprod} and Lemma \ref{lem:ineq}. Furthermore $(\ti{D},\ti{g_d})$ is a complete, locally compact path metric space and thus proper by the Hopf-Rinow-Theorem. Since the volume of all unit balls in $(\ti{D},\ti{g}_d)$ is bounded from above by $\varepsilon_n$ this contradicts Proposition \ref{prop:wenl}.
\end{proof}

\begin{Bemerkung}
The same proof also works for bands over closed width-enlargeable manifolds except for the fact that we do not know whether a product result like Proposition \ref{prop:fillprod} also holds for width enlargeable manifolds i.\ e. whether $M\times S^1$ is width enlargeable if $M$ is width enlargebale (see Remark \ref{rem:orient}).\\
However, since the product of an aspherical manifold with $S^1$ is aspherical as well, Theorem \ref{thm:2} holds true for all closed aspherical manifolds, not only the orientable ones.
\end{Bemerkung}

As we stated in the introduction Theorem \ref{thm:2} does not hold true for all essential manifolds. This is due to the fact that some essential manifolds, for example $\R P^{n-1}$, actually do admit metrics with uniformly positive (macroscopic) scalar curvature at all scales. This leads to the following: 

\begin{Beispiel} \label{ex}
Let $g$ be the standard metric on $\R P^{n-1}$ induced by the round metric on the unit sphere $S^{n-1}$. Consider the direct product with an interval of arbitrary length $[0,\ell]$. We can produce a metric with any given lower bound on $Sc_1(p)$ for all $p\in \R P^{n-1}\times [0,\ell]$ by just rescaling the metric on $\R P^{n-1}$ to be very small, since the volume of a unit ball in the universal cover of the product is bounded from above by the volume of a unit ball in $S^{n-1}$, which becomes very small when rescaling the metric with a small constant.
\end{Beispiel}

We spend the rest of this section proving Theorem \ref{thm:main}. It will be crucial to relate the systoles of $(V,g)$ and $(D,g_d)$.

\begin{Lemma} \label{cor}
Let $M$ be a closed manifold and $V:=M\times[0,1]$. If $g$ is a Riemannian metric on V, then 
\begin{equation}
\sys(D,g_d)\geq {\rm min}\{\sys(V,g),2\wid(V,g)\}
\end{equation}
where $(D,g_d)$ denotes the metric double of $(V,g)$.
\end{Lemma}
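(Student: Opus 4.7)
The plan is to deduce this immediately from Proposition \ref{prop:sys}, which does all the substantive work. The systole of $(D,g_d)$ is by definition the infimum of $\len(\gamma)$ over all closed noncontractible piecewise smooth curves $\gamma$ in $D$, so it suffices to show that every such $\gamma$ has $\len(\gamma) \geq \min\{\sys(V,g), 2\wid(V,g)\}$, and then take the infimum.

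First I would fix an arbitrary closed noncontractible piecewise smooth curve $\gamma$ in $(D,g_d)$ and apply Proposition \ref{prop:sys}. This yields a dichotomy: either $\len(\gamma) \geq 2\wid(V,g)$ directly, or there exists a closed noncontractible piecewise smooth curve $\tilde{\gamma}$ in $(V,g)$ with $\len(\tilde{\gamma}) = \len(\gamma)$. In the first case, the lower bound $\min\{\sys(V,g), 2\wid(V,g)\}$ is trivially met. In the second case, $\len(\gamma) = \len(\tilde{\gamma}) \geq \sys(V,g)$ since $\tilde{\gamma}$ is a competitor in the definition of the systole of $(V,g)$; again the desired lower bound holds. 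Taking the infimum over all such $\gamma$ gives the inequality.

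There is no genuine obstacle here, since the heavy lifting — namely the retraction/homotopy argument that replaces excursions of $\gamma$ into the mirror copy $V_2$ by their reflections in $V_1$ without changing length or free homotopy class — has already been carried out in Proposition \ref{prop:sys}. The only point worth flagging is that Proposition \ref{prop:sys} is stated for noncontractible curves, which is exactly what the systole definition feeds it, so no additional case analysis is required.
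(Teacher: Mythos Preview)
Your proposal is correct and matches the paper's own argument essentially verbatim: the paper also derives the lemma as an immediate consequence of Proposition~\ref{prop:sys}, noting only that the systole is the infimum of lengths over noncontractible loops. You have simply spelled out the dichotomy explicitly, which is fine.
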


\begin{proof}
As the systole is defined to be the infimum over the lengths of all closed noncontractible piecewise smooth curves, this follows immediately from Proposition \ref{prop:sys}.
\end{proof}

Lemma \ref{cor} implies that to estimate the systole or the width of a Riemannian band $(V,g)$ from above in terms of some $R>0$, it is enough to estimate the systole of the metric double $(D,g_d)$ from above in terms of $R$.\\
In order to achieve this we use Theorem \ref{thm:pap} and the next two lemmata, the second of which appears in \cite{Nab19} and is attributed to Roman Karasev.

\begin{Lemma} \label{lem:ess}
Let $M^{n-1}$ be a closed essential manifold and $V=M\times[0,1]$ a band over $M$. Then the double $D$ is essential as well.
\end{Lemma}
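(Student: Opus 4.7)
The plan is to reduce the claim to a Künneth-formula computation, after first observing what $D$ is topologically. Directly from the gluing description, $D$ is homeomorphic to $M\times S^1$: we are identifying $M\times\{-1\}$ in $V_2=M\times[-1,0]$ with $M\times\{1\}$ in $V_1=M\times[0,1]$ via $s(x,-1)=(x,1)$ and $M\times\{0\}$ in $V_2$ with $M\times\{0\}$ in $V_1$ via the identity, so what remains is $M\times[-1,1]$ with the ends identified. Since essentiality of a closed manifold is a homotopy invariant and independent of the chosen smooth triangulation, it suffices to show that $M\times S^1$ is essential whenever $M$ is.

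First I would identify the relevant classifying data. We have $\pi_1(M\times S^1)=\pi_1(M)\times\mathbb{Z}$ and, because $S^1=K(\mathbb{Z},1)$, a model for the classifying space is $K(\pi_1(M)\times\mathbb{Z},1)=K(\pi_1(M),1)\times S^1$. If $f\colon M\to K(\pi_1(M),1)$ is a classifying map for $M$, then $F:=f\times\mathrm{id}_{S^1}\colon M\times S^1\to K(\pi_1(M),1)\times S^1$ is a classifying map for $M\times S^1$.

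Next I would trace the fundamental class through $F_*$. Pick $G\in\{\mathbb{Z},\mathbb{Z}_2\}$ such that $f_*[M]\neq 0$ in $H_{n-1}(K(\pi_1(M),1);G)$; such a $G$ exists by hypothesis. In $G$-coefficients (using $[M\times S^1]:=[M]\times[S^1]$, with $G=\mathbb{Z}_2$ covering the non-orientable case and $G=\mathbb{Z}$ the orientable case), one has
\begin{equation*}
F_*[M\times S^1] \;=\; (f\times\mathrm{id})_*\bigl([M]\times[S^1]\bigr) \;=\; f_*[M]\,\times\,[S^1].
\end{equation*}
Since $H_*(S^1;G)$ is free, the Künneth formula with $G$-coefficients has no Tor contribution, and the cross product of two nonzero classes is nonzero. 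Thus $F_*[M\times S^1]\neq 0$, which is exactly what essentiality of $D\cong M\times S^1$ requires.

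I do not anticipate any real obstacle: the identification $D\cong M\times S^1$ is visible from the construction, and the Künneth step is a textbook fact once one uses the triviality of Tor against $H_*(S^1;G)$. The only thing to be mildly careful about is bookkeeping of coefficients, which is why I would phrase the argument uniformly in $G\in\{\mathbb{Z},\mathbb{Z}_2\}$ so that orientable and non-orientable $M$ are handled simultaneously.
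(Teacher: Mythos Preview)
Your proposal is correct and follows essentially the same route as the paper: identify $D\cong M\times S^1$, take the classifying map to be $f\times\mathrm{id}_{S^1}$ into $K(\pi_1(M),1)\times S^1$, and use the K\"unneth formula to see that $f_*[M]\times[S^1]\neq 0$. The paper phrases the hypothesis as ``$f_*$ has non-trivial image'' and picks a class $\alpha$ with $f_*\alpha\neq 0$, but for a closed $(n-1)$-manifold the top homology with the appropriate coefficients is generated by the fundamental class, so your formulation is equivalent; your explicit remark that $H_*(S^1;G)$ is free (hence no Tor term) is exactly the reason the cross product is injective.
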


\begin{proof}
Since $M^{n-1}$ is closed and essential the classifying map $f:M\rightarrow K(\pi_1(M),1)$ induces a homomorphism $f_*:H_{n-1}(M;G)\rightarrow H_{n-1}(K(\pi_1(M),1);G)$ with non-trivial image for coefficients $G=\Z$ or $\Z_2$. Since $D$ is homeomorphic to  $M\times S^1$ we have $\pi_1(D)=\pi_1(M)\times\Z$. Hence a $K(\pi_1(D),1)$-space is given by $K(\pi_1(M),1)\times S^1$. The classifying map $g:D\rightarrow K(\pi_1(M),1)\times S^1$ is the direct product of $f$ and $id_{S^1}$. The general K\"unneth formula tells us that the cross product maps$$H_{n-1}(M;G)\otimes H_1(S^1;G)\rightarrow H_n(M\times S^1;G)$$ and $$H_{n-1}(K(\pi_1(M),1);G)\otimes H_1(S^1;G)\rightarrow H_n(K(\pi_1(M),1)\times S^1;G)$$ are injective and commute with the maps induced by $f, id_{S^1}$ and $g$. Let $\alpha\in H_n(M;G)$ be a class with $f_*\alpha\neq0$ and denote by $e$ the generator of $H_1(S^1;G)$. Then $g_*(\alpha\times e)=f_*\alpha\times e\neq0,$ proving the lemma.
\end{proof}

\begin{Lemma} \label{lem:kar}
If $(X^n,g)$ is an essential Riemannian polyhedron, then
$\sys(X,g)\leq 2UR_{n-1}(X,g)$.
\end{Lemma}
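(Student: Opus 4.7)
The plan is to argue by contradiction. Suppose that $2UR_{n-1}(X,g) < \sys(X,g)$ and fix a radius $r$ with $UR_{n-1}(X,g) < r < \tfrac{1}{2}\sys(X,g)$. By Lemma \ref{lem:mod}, one obtains a locally finite open cover $\mathcal{U}=(U_i)_{i\in\mathcal{I}}$ of $X$ with multiplicity $\leq n$ and radius $<r$.

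First I would observe that for every $i\in\mathcal{I}$ the inclusion $U_i\hookrightarrow X$ induces the trivial map on fundamental groups. The argument is the same as in the proof of Proposition \ref{prop:lift2}: any loop in $U_i\subset B_r(p_i)$ can be subdivided into short arcs, each joined to $p_i$ by a minimizing geodesic, producing a concatenation of thin triangles, each of length $<2r+\delta<\sys(X,g)$ for sufficiently small $\delta$, and therefore contractible in $X$.

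Next, let $N$ denote the nerve of $\mathcal{U}$, which by the multiplicity bound is a simplicial complex of dimension $\leq n-1$, and let $\phi:X\to N$ be the nerve map associated to a partition of unity subordinate to $\mathcal{U}$. The heart of the argument is to construct a map $\psi:N\to K(\pi_1(X),1)$ such that $\psi\circ\phi$ is homotopic to the classifying map $f:X\to K(\pi_1(X),1)$. I would build $\psi$ by induction over the skeleta of $N$: on vertices, choose $x_i\in U_i$ and set $\psi(v_i):=f(x_i)$; on each edge $[v_i,v_j]$, pick $x_{ij}\in U_i\cap U_j$ and use arbitrary paths inside $U_i$ and $U_j$, which map under $f$ to paths that are uniquely determined up to homotopy in $K(\pi_1(X),1)$ by the first step, to define $\psi$ on the edge; higher simplices are filled in using the vanishing of $\pi_k(K(\pi_1(X),1))$ for $k\geq 2$. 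The same inductive bookkeeping produces a homotopy between $\psi\circ\phi$ and $f$.

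Finally, since $\dim N\leq n-1$, we have $H_n(N;G)=0$ for any abelian group $G$, so $f_*[X]=\psi_*\phi_*[X]=0$ for $G=\Z$ and $G=\Z_2$, contradicting the essentiality of $X$. The main obstacle is the joint inductive construction of $\psi$ together with the homotopy $\psi\circ\phi\simeq f$; once the covering sets $U_i$ are known to be $\pi_1$-trivial in $X$, this is a classical obstruction-theoretic argument, but the combinatorial bookkeeping needed to make coherent choices across the entire cover is the main technical point.
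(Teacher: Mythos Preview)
Your overall strategy coincides with the paper's: argue by contradiction, produce an open cover of multiplicity $\le n$ and small radius, show each $U_i$ includes $\pi_1$-trivially into $X$, and conclude that the classifying map factors through the $(n-1)$-dimensional nerve $N$. Where your execution differs from the paper is in the last step. The paper does \emph{not} build $\psi:N\to K(\pi_1(X),1)$ by hand. Instead it lifts the cover $\{U_\alpha\}$ to the universal cover $\widetilde X$ using Lemma~\ref{lem:lift}, observes that the nerve $N'$ of the lifted cover is a covering space of $N$, and uses the commuting square
\[
\begin{CD}
\widetilde X @>>> N'\\
@VVV @VVV\\
X @>>> N
\end{CD}
\]
to show that $\phi_*:\pi_1(X)\to\pi_1(N)$ is injective (a noncontractible loop in $X$ lifts to a path with distinct endpoints in $\widetilde X$, which must land in distinct lifted cover sets, hence maps to a nontrivial loop in $N$). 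Combined with the standard surjectivity of $\phi_*$, this gives a $\pi_1$-isomorphism, so the classifying map of $X$ factors through $N$ automatically.

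Your direct obstruction-theoretic construction of $\psi$ has a gap precisely at the step the paper's covering argument supplies. To extend $\psi$ over a $2$-simplex $[v_i,v_j,v_k]$ of $N$ you must show that the loop $\gamma_{ij}\cdot\gamma_{jk}\cdot\gamma_{ki}$ is null in $\pi_1(X)$; this is \emph{not} a consequence of the vanishing of $\pi_k(K(\pi_1(X),1))$ for $k\ge 2$, which only takes care of filling simplices of dimension $\ge 3$. The edge path $\gamma_{ij}$ depends (up to homotopy in $X$) on the component of $U_i\cap U_j$ in which $x_{ij}$ lies, and there is no a priori reason these choices are coherent around a $2$-simplex. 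Showing that they are is exactly equivalent to the $\pi_1$-injectivity of the nerve map, which the paper proves via the universal cover. So your sketch can be completed, but the ``classical obstruction-theoretic argument'' you invoke is not automatic here; the missing ingredient is precisely the paper's lifting argument.
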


\begin{proof}
Assume that $\sys(X,g)>2UR_{n-1}(X,g):=2R$. Choose a covering of $X$ with multiplicity $\leq n$ by connected open sets $U_\alpha$ of radii $\leq R+\delta$ with $\delta<\frac{1}{3} (\sys(X,g)-2R)$. Let $\gamma$ be a loop contained in one of the $U_\alpha$ and $p$ be the center of a ball of radius $R+\delta$ that contains $U_\alpha$. There is a subdivision $0=t_0<\ldots<t_k=1$ of the unit interval such that $d_g(\gamma(t_i),\gamma(t_{i+1}))<\delta$. We can connect each $\gamma(t_i)$ to $p$ by a minimizing geodesic $\sigma_i$ of length $<R+\delta$ and thus $\gamma$ is homotopic to the concatenation of $k$ 'thin' triangles $\sigma_i(1-t)\cdot \gamma|_{[t_i,t_{i+1}]}\cdot \sigma_i$. Each of these triangles is a loop based at $p$ of length less than $\sys(X,g)$ and thus contractible, which implies that $\gamma$ is contractible as well. It follows that the inclusion homomorphisms $\pi_1(U_\alpha)\rightarrow \pi_1(X)$ are trivial and hence each $U_\alpha$ lifts to a collection of disjoint open sets $(\ti{U}_\alpha)_g$ (with $g\in\pi_1(X)$), homeomorphic to $U_\alpha$, in the universal cover $(\ti{X},\ti{g})$ (for more details see Lemma \ref{lem:lift}).\\
If we consider the nerves $N$ of the covering $\{U_\alpha \}$ of $X$ and $N'$ of the covering $\{(\ti{U}_\alpha)_g\}$ of $\ti{X}$, we see that $N'$ is a covering of $N$ and the following diagram commutes: $$
\begin{CD}
\ti{X}     @>>>  N'\\
@VVV        @VVV\\
X     @>>>  N.
\end{CD}$$
Here the vertical arrows are the covering projections while the horizontal arrows are the nerve maps. Let $p\in X$ be a point and $\gamma$ a noncontractible loop in $X$ based at $p$. Then $\gamma$ lifts to a path $\ti{\gamma}$ connecting two different points $\ti{p}_1$ and $\ti{p}_2$ in the fiber over $p$. By construction $\ti{p}_1$ and $\ti{p}_2$ are not contained in a common set $(\ti{U}_\alpha)$. Hence $\ti{\gamma}$ is mapped to a path connecting different points in the nerve $N'$. When projected to $N$ this yields a noncontractible loop, which agrees with the image of $\gamma$ under the nerve map $X\rightarrow N$. It follows that the induced map $\pi_1(X)\rightarrow \pi_1(N)$ is injective. Since this map is always surjective it is an isomorphism. Therefore the classifying map $X\rightarrow K(\pi_1(X),1)$ factors through the $(n-1)$-dimensional nerve $N$, so $X$ is not essential.
\end{proof}

With these ingredients we can prove Theorem \ref{thm:main}:

\begin{proof}[Proof of Theorem \ref{thm:main}]
Consider the double $(D,g_d)$ of $(V,g)$ as before. Every $R$-ball in $V$ has volume smaller than $\frac{1}{2}\e_nR^n$, and hence every $R$-ball in $D$ has volume smaller than $\e_nR^n$ (see the proof of Theorem \ref{thm:2}). Now Theorem \ref{thm:pap} implies that $UR_{n-1}(D,g_d)\leq R$. As $M$ is essential, $D$ is essential as well by Lemma \ref{lem:ess}, and hence Lemma \ref{lem:kar} implies
\begin{displaymath}
\sys(D,g_d)\leq 2UR_{n-1}(D,g_d)\leq 2R.
\end{displaymath}
Finally it follows from Lemma \ref{cor}, that ${\rm min}\{\sys(V,g),2\wid(V,g)\}\leq2R$, which proves the theorem as we assumed $\sys(V,g)>2R$.
\end{proof}

\begin{Bemerkung}
There are two ways to look at this result: on the one hand, if we assume that $2R<\sys(V,g)$ (like we did in Theorem \ref{thm:main}) the above proof produces a band width estimate.\\
On the other hand, if we replace the assumption that $2R<\sys(V,g)$ by $\wid(V,g)>R$, the above proof produces an estimate for the systole. One could consider this to be an extension of the classical systolic inequality to bands over essential manifolds which are wide enough.
\end{Bemerkung}

\section{Homological invariance of filling enlargeability}

In this section we closely follow the arguments of \cite{BH10}*{Section 3} to prove Theorem \ref{thm:3}. In order to do this, we need to extend our notion of filling enlargeability from closed oriented manifolds to rational homology classes of simplicial complexes.\\
In the following if $p:\ov{X}\rightarrow X$ is a (not necessarily connected) cover of a simplicial complex $X$ and $c\in H_n(X;\Q)$ is a (simplicial) homology class, the \emph{transfer} $p^!(c)\in H^{lf}_n(\ov{X};\Q)$ is represented by the formal sum of all possible lifts of simplices in a chain representative of $c$, where every lift of a simplex $\sigma:\Delta^n\rightarrow X$ is added with the same coefficient as $\sigma$. For more information on the transfer homomorphism (in the case of finite coverings) see for example \cite{Hat02}*{3.G}.

\begin{Definition}
A connected subcomplex $S$ of a simplicial complex $X$ is called $\pi_1$\emph{-surjective} if the inclusion induces a surjection on fundamental groups and we say that $S$ \emph{carries} a homology class $c\in H_*(X;\Q)$ if $c$ lies in in the image of the map in homology induced by the inclusion.
\end{Definition}

\begin{Definition} \label{def:fillen} (Compare \cite{BH10}*{Definition 3.1})
Let $X$ be a simplicial complex with finitely generated fundamental group and let $c\in H_n(X;\Q)$ be a (simplicial) homology class. Choose a finite connected $\pi_1$-surjective subcomplex $S\subset X$ carrying $c$. (This subcomplex exists because $\pi_1(X)$ is finitely generated.)\\
The class $c\in H_n(X;\Q)$ is called \emph{filling enlargeable}, if the following holds: For any $r>0$, there is a connected cover $p:\ov{X}\rightarrow X$ such that the class $p^!(c)\in H^{lf}_n(\ov{S};\Q)$ does not vanish in the $r$-neighborhood of the Kuratowski embedding $\iota(\ov{S})$. Here $\ov{S}=p^{-1}(S)$ (which is connected since $S$ is $\pi_1$-surjective) is equipped with the canonical path metric.
\end{Definition}

\begin{Bemerkung}
A closed oriented manifold $M^n$ is filling enlargeable (as in Definition \ref{def:fill}) if and only if its fundamental class $[M]$ is filling enlargeable (choose $S=M$ in Definition \ref{def:fillen}).
\end{Bemerkung}

As in \cite{BH10} we need to show that this definition does not depend on the choice of $S$. Let $S'\subset S$ be a smaller $\pi_1$-surjective subcomplex carrying $c$ and $r>0$ be arbitrary. Let $\ov{S}$ be a covering of $S$ such that $p^!(c)$ does not vanish in the $r$-neighborhood of $\iota(\ov{S})$. The lifted inclusion $\ov{S'}\hookrightarrow \ov{S}$ is 1-Lipschitz and extends to a nonexpanding map $L^\infty (\ov{S'})\rightarrow L^{\infty}(\ov{S})$. By naturality of $p^!$ the class $p^!(c)\in H^{lf}_n(\ov{S'};\Q)$ can not vanish in the $r$-neighborhood of $\iota(\ov{S'})$.\\
Now for two different $\pi_1$-surjective subcomplexes carrying $c$ there is always a third one containing both. By the above it now remains to show that if $T\supset S$ is a larger $\pi_1$-surjective subcomplex carrying $c$ then we can pass from $S$ to $T$ in Definition \ref{def:fillen}. This will be shown by induction on the skeleta $T^{(k)}$ of $T$. At the start of the induction we treat the cases $k=0,1$ simultaneously. For the induction step we will need the following Lemma, which works as a substitute for \cite{BH10}*{Lemma 3.2}.

\begin{Lemma} \label{lem:attach}
Let $X$ be a connected simplicial complex and $c\in H_n(X;\Q)$ be a (simplicial) homology class. Let $Y\subset X$ be a subcomplex carrying $c$ such that $X\backslash Y$ is the disjoint union of possibly infinitely many copies of the interior of a $k$-dimensional simplex with $k\geq 2$. There is a constant $\delta_k$ such that the following holds true: If the class $c$ does not vanish in the $r$-neighborhood of $\iota(Y)$, then $c$ does not vanish in the $(\delta_k r-1)$-neighborhood of $\iota(X)$.
\end{Lemma}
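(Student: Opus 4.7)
The plan is to reduce the question to a bi-Lipschitz comparison of the two natural metrics on $Y$: the intrinsic path metric $d_Y$ and the subspace metric $d_X|_Y$ induced from $X$. The geometric heart of the argument, and the one place where $k\geq 2$ is used, is the observation that radial projection from the barycenter of a standard $k$-simplex $\Delta^k$ onto $\partial\Delta^k$ is Lipschitz with a constant $C_k$ depending only on $k$; for $k=1$ the boundary is disconnected, so no such comparison can hold. Applying this projection to each shortcut through the interior of one of the attached $k$-simplices and concatenating yields
\begin{displaymath}
d_X|_Y \leq d_Y \leq C_k\cdot d_X|_Y
\end{displaymath}
on $Y$. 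I set $\delta_k:=1/C_k$.

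I then argue by contraposition. Assume $\iota_X(c)$ bounds a locally finite chain $\sigma$ contained in the $R$-neighborhood of $\iota_X(X)\subset L^\infty(X)_b$ for $R=\delta_k r - 1$. Viewed on $(\iota_X(Y),d_X)$, the assignment $\iota_X(y)\mapsto\iota_Y(y)\in L^\infty(Y)_b$ is $C_k$-Lipschitz by the comparison above, and proper, since $d_Y\geq d_X$ on $Y$. By Lemma \ref{lem:univ} it extends to a $C_k$-Lipschitz map $F\colon L^\infty(X)_b\to L^\infty(Y)_b$ satisfying $F\circ\iota_X|_Y=\iota_Y$. Every point of $X$ lies within distance $1$ of $Y$ (the attached simplices are of unit diameter), so $\sigma$ is supported in the $(R+1)$-neighborhood of $\iota_X(Y)$, and Lemma \ref{lem:proper} ensures that $F$ restricted to this bounded neighborhood is proper. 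Hence $F_*\sigma$ is a well-defined locally finite chain, supported in the $C_k(R+1)=r$-neighborhood of $\iota_Y(Y)$, with boundary $F_*(\iota_X(c))=\iota_Y(c)$. This contradicts the hypothesis that $\iota_Y(c)$ does not vanish in the $r$-neighborhood of $\iota_Y(Y)$.

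The main obstacle is verifying that the bi-Lipschitz constant can indeed be taken to depend only on $k$. This presupposes that the Riemannian metrics on the attached $k$-simplices admit a common normalization, which is built into the phrasing \emph{copies of the interior of a $k$-dimensional simplex}: once each attached simplex is an isometric copy of a fixed reference $k$-simplex of diameter one, the constant $C_k$ follows from a routine compactness argument applied to the radial projection from the barycenter. The $-1$ on the right hand side of the claimed estimate reflects this unit diameter. With the geometric input in place the remaining work is a Kuratowski extension argument, formally analogous to the proof of Lemma \ref{lem:fill}.
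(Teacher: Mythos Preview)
Your argument is essentially the paper's proof: both establish the bi-Lipschitz comparison $d_X|_Y\le d_Y\le C_k\,d_X|_Y$ by pushing geodesic shortcuts through attached $k$-simplices onto their boundaries, then transport the filling via a $C_k$-Lipschitz Kuratowski extension (the paper argues directly, you by contraposition, but the content is the same). One minor imprecision: radial projection from the barycenter of $\Delta^k$ onto $\partial\Delta^k$ is \emph{not} Lipschitz, since its norm blows up near the barycenter; what you actually need---and what your compactness remark correctly yields---is that the identity map $(\partial\Delta^k,d_{\Delta^k})\to(\partial\Delta^k,d_{\partial\Delta^k})$ is $C_k$-Lipschitz, which is exactly how the paper phrases it.
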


\begin{proof}
Let $\Delta^k$ be the standard simplex endowed with the canonical path metric $d_k$. Consider its boundary $\p\Delta^k$ and the canonical map $(\p\Delta^{k},d_{\Delta^k})\rightarrow \p\Delta^{k}$ with Lipschitz constant $\frac{1}{\delta_k}$ for some $0<\delta_k\leq 1$. If we rescale the canonical path metric on $Y$ by $\delta_k$ then the map $(Y,d_X)\rightarrow (Y,\delta_kd_Y)$ is non expanding. To see this let $v$ and $v'$ be two points in $Y$. By definition there is a path $\gamma$ in $X$ connecting $v$ and $v'$ with $\len(\gamma)=d_X(v,v')$. By replacing all segments of $\gamma$ lying the interior of a copy of $\Delta^k$ with shortest paths connecting the endpoints in $\p\Delta^k$ we construct a path $\gamma'\subset Y$ of length $\leq \frac{1}{\delta_k}\len(\gamma)$. Hence $d_Y(v,v')\leq\frac{1}{\delta_k}d_X(v,v')$, which proves the claim. If $c$ does not vanish in the $r$-neighborhood of $\iota(Y)$ then it does not vanish in the $\delta_kr$-neighborhood of $\iota(Y,\delta_kd_Y)$. Using Lemma \ref{lem:fill}, we conclude that $c$ does not vanish in the $\delta_kr$-neighborhood of $\iota(Y,d_X)$. As the $(\delta_k r-1)$-neighborhood of $\iota(X)$ is contained in the $ \delta_kr$-neighborhood of $\iota(Y,d_X)$, this proves the Lemma.
\end{proof}

Now we can start the induction process from \cite{BH10}: First assume that $T\backslash S$ contains only one vertex $v$. Let $\ov{V}\subset\ov{T}$ be the set of lifts of $v$. For each $\ov{v}\in\ov{V}$ let $F(\ov{v})\subset\ov{S}$ be the set of vertices having a common edge with $\ov{v}$. Note that $F(\ov{v})$ is nonempty and finite since $\ov{T}$ is connected and locally finite. Let $F(\ti{v})\subset\ti{S}$ be the subset defined in an analogous fashion as $F(\overline{v})$ but with $\ov{S}$ replaced by the universal cover $\ti{S}\rightarrow S$ (and $v$ by a point $\ti{v}$ over $v$) and set $$d:={\rm diam} (F(\ti{v}))$$ measured with respect to canonical the path metric in $\ti{S}$. Then $d$ is independent of the choice of $\ti{v}$ and $r$ and furthermore $${\rm diam}F(\ov{v})\leq d.$$ Now the Lipschitz constant of the canonical map $(\ov{S},d_{\ov{T}})\rightarrow \ov{S}$ is smaller or equal $\frac{d}{2}$. As in the proof of Lemma \ref{lem:attach} we rescale the canonical path metric on $\ov{S}$ by $\frac{2}{d}$. If $p^!(c)$ does not vanish in the $r$-neighborhood of $\iota(\ov{S})$, then the same holds true for the $\frac{2r}{d}$-neighborhood of $\iota(\ov{S},\frac{2}{d}d_{\ov{S}})$. Hence $p^!(c)$ does not vanish in the $\frac{2r}{d}$-neighborhood of $\iota(\ov{S},d_{\ov{T}})$ and since the  $(\frac{2r}{d}-1)$-neighborhood of $\iota(\ov{T})$ is contained in the $\frac{2r}{d}$-neighborhood of $\iota(\ov{S},d_{\ov{T}})$ we found a lower bound for the filling radius of $p^!(c)$ in $\ov{T}$ which only depends on $S$ and $T$. If $T\backslash S$ contains more than one vertex, we apply this procedure inductively, where in each induction step we pick a vertex which has a common edge with some vertex in the subcomplex of $\ov{T}$ that has already been treated. As $T$ and $S$ are both finite this produces (if $r$ is sufficiently large) a constant $\delta_1'$ such that $p^!(c)$ does not vanish in the $\delta_1' r$-neighborhood of $\ov{S\cup T^1}$. For the induction step we assume that this holds true for the $\delta_k' r$-neighborhood of $\ov{S\cup T^k}$. By Lemma \ref{lem:attach} we conclude that $p^!(c)$ does not vanish in the $(\delta_{k+1}\delta_k' r)$-neighborhood of $\ov{S\cup T^{k+1}}$. In the end we get a constant $\delta$  only depending on $S$ and $T$ (and not on $r$), such that $c$ does not vanish in the $\delta r$-neighborhood of $\iota(\ov{T})$. Hence the notion of filling enlargeability is well defined. Next we study functorial properties of filling enlargeable homology classes (compare \cite{BH10}*{Proposition 3.4}).

\begin{Proposition} \label{prop:funct}
Let $X$ and $Y$ be connected simplicial complexes with finitely generated fundamental groups and let $\phi:X\rightarrow Y$ be a continuous map. Then following implications hold:
\begin{itemize}
\item If $\phi$ induces a surjection of fundamental groups and $\phi_*(c)$ is filling enlargeable, then $c$ is filling enlargeable.
\item If $\phi$ induces an isomorphism of fundamental groups and $c$ is filling enlargeable, then also $\phi_*(c)$ is filling enlargeable.
\end{itemize}
\end{Proposition}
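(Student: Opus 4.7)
My plan is to adapt the corresponding arguments of \cite{BH10}*{Proposition~3.4} by replacing maps to spheres with the $L^\infty$-extension techniques of Lemma~\ref{lem:univ} and Lemma~\ref{lem:proper}. After subdividing and applying simplicial approximation I assume $\phi$ is simplicial. I choose finite connected $\pi_1$-surjective subcomplexes $S_X\subset X$ carrying $c$ and $S_Y\subset Y$ carrying $\phi_*(c)$ with $\phi(S_X)\subset S_Y$, so that $\phi|_{S_X}$ is $L$-Lipschitz for some $L>0$. Given any connected cover $p_Y\colon\overline{Y}\to Y$, form the pullback cover $\overline{X}:=X\times_Y\overline{Y}$ together with its canonical lift $\overline{\phi}\colon\overline{X}\to\overline{Y}$. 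By compactness of $S_X$ and $S_Y$ the restriction $\overline{\phi}|_{\overline{S_X}}$ is $L$-Lipschitz and proper, and hence by Lemma~\ref{lem:univ} combined with Lemma~\ref{lem:proper} it extends to a proper $L$-Lipschitz map $F\colon L^\infty(\overline{S_X})_b\to L^\infty(\overline{S_Y})_b$ sending $U_r(\iota\overline{S_X})$ into $U_{Lr}(\iota\overline{S_Y})$. Naturality of the transfer yields the crucial identity
\begin{equation*}
\overline{\phi}_*\bigl(p_X^!(c)\bigr)=p_Y^!(\phi_*(c))\in H_n^{lf}(\overline{S_Y};\mathbb{Q}).
\end{equation*}

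For part~(1), surjectivity of $\phi_*$ on $\pi_1$ ensures the pullback $\overline{X}$ is connected, since $\pi_1(X)$ acts transitively on each fiber through $\phi_*$. Given $r>0$, filling enlargeability of $\phi_*(c)$ at radius $Lr$ furnishes a cover $\overline{Y}$ in which $p_Y^!(\phi_*(c))$ is non-zero in $U_{Lr}(\iota\overline{S_Y})$. If $p_X^!(c)$ bounded a locally finite chain in $U_r(\iota\overline{S_X})$, pushing it forward via $F$ would produce a bounding chain for $p_Y^!(\phi_*(c))$ in $U_{Lr}(\iota\overline{S_Y})$, contradicting the choice of $\overline{Y}$; so $c$ is filling enlargeable.

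Part~(2) is more delicate since naturality now runs in the wrong direction. I plan to factor $\phi$ through the mapping cylinder as $X\stackrel{i}{\hookrightarrow}M_\phi\stackrel{\rho}{\twoheadrightarrow}Y$, where both $i$ and $\rho$ are $\pi_1$-isomorphisms and $\rho$ admits a Lipschitz homotopy inverse $s\colon Y\to M_\phi$ with $s\circ\phi\simeq i$. Applying part~(1) to both $\rho$ and $s$ then establishes the equivalence that $\phi_*(c)$ is filling enlargeable in $Y$ if and only if $i_*(c)$ is filling enlargeable in $M_\phi$. The problem thus reduces to showing $c$ filling enlargeable implies $i_*(c)$ filling enlargeable across the cofibration $i\colon X\hookrightarrow M_\phi$. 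For this I exploit the explicit mapping cylinder neighborhood $X\times[0,\tfrac12]\subset M_\phi$, which admits a Lipschitz deformation retract onto $X$ lifting coherently to every corresponding cover via the subgroup bijection induced by $\phi_*$; extending this lifted retraction via Lemma~\ref{lem:univ} produces a Lipschitz map $G\colon L^\infty(\overline{S_{M_\phi}})_b\to L^\infty(\overline{S_X})_b$ that pulls bounding chains for $p_{M_\phi}^!(i_*(c))$ near $\iota\overline{S_{M_\phi}}$ back to bounding chains for $p_X^!(c)$ near $\iota\overline{S_X}$, with controlled radius blow-up.

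The main technical obstacle is this cofibrant case of part~(2): arranging the pull-back map $G$ so that a filling of $p_{M_\phi}^!(i_*(c))$ at scale $r$ yields a filling of $p_X^!(c)$ at a scale comparable to $r$, thereby producing the desired contradiction with filling enlargeability of $c$ for sufficiently large $r$. All other ingredients reduce to standard manipulations of pullback covers, $L^\infty$-extensions, and naturality of the transfer.
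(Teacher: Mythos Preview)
Your argument for part~(1) is correct and matches the paper's proof: pull back the cover via $\phi$, use the Lipschitz bound on $\phi|_{S_X}$, and invoke naturality of the transfer together with the $L^\infty$-extension.

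For part~(2), your reduction to the inclusion $i\colon X\hookrightarrow M_\phi$ via the mapping cylinder is also what the paper does (phrased there as ``replace $Y$ by a homotopy equivalent complex''). But from that point on you are working much too hard, and the retraction scheme you sketch has a gap: the collar $X\times[0,\tfrac12]$ retracts onto $X$, but your chosen $\pi_1$-surjective subcomplex $S_{M_\phi}$ need not lie in that collar, so there is no obvious Lipschitz map $\overline{S_{M_\phi}}\to\overline{S_X}$ with the right effect on $p^!(i_*(c))$. You flag this yourself as the ``main technical obstacle,'' and indeed it does not resolve cleanly along the lines you indicate.

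The point you are missing is that the independence of Definition~\ref{def:fillen} from the choice of $S$, established just before the proposition, makes the inclusion case immediate. Once $\phi$ is the inclusion $X\hookrightarrow Y$ (or $i\colon X\hookrightarrow M_\phi$), the subcomplex $S_X\subset X$ is \emph{already} a finite connected $\pi_1$-surjective subcomplex of $Y$ carrying $\phi_*(c)$, because $\phi$ is a $\pi_1$-isomorphism. Moreover, every connected cover $\overline{X}\to X$ arises as the restriction of a connected cover $\overline{Y}\to Y$, and then $p_Y^{-1}(S_X)=p_X^{-1}(S_X)$ with $p_Y^!(\phi_*(c))=p_X^!(c)$ on it. So the very covers witnessing filling enlargeability of $c$ via $S_X$ witness filling enlargeability of $\phi_*(c)$ via the same $S_X$. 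No retraction, no map $G$, no radius blow-up is needed.
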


\begin{proof}
First assume that $\phi_*(c)$ is filling enlargeable and $\phi$ is surjective on $\pi_1$. Let $S\subset X$ be a finite connected $\pi_1$-surjective subcomplex carrying $c$. Then $\phi(S)$ is contained in a finite $\pi_1$-surjective subcomplex $T\subset Y$ carrying $\phi_*(c)$. As $S$ and $T$ are both compact the map $\phi:S\rightarrow T$ is Lipschitz with Lipschitz constant $\frac{1}{\lambda}$. Hence, if we rescale the canonical path metric on $T$ by $\lambda$, this map is nonexpanding.\\
Let $r>0$ and choose a connected cover $p_Y:\ov{Y}\rightarrow Y$ such that $p^!(\phi_*(c))$ does not vanish in the $\frac{1}{\lambda}r$-neighborhood of $\iota(\ov{T})$. Then the same holds true for the $r$-neighborhood of $\iota(\ov{T},\lambda d_{\ov{T}})$. Let $p_X:\ov{X}\rightarrow X$ be the pullback of the covering to $X$. This will be connected since $\phi$ is surjective on $\pi_1$ and we get a map of covering spaces 
$$
\begin{CD}
\ov{S}     @>{\ov{\phi}}>>  \ov{T}\\
@V{p_X}VV        @VV{p_Y}V\\
S    @>{\phi}>>  T,
\end{CD}$$
which restricts to a bijection on each fibre. In particular it is nonexpanding, if we rescale the canonical path metric on $\ov{T}$ by $\lambda$, and by naturality it maps $p^!(c)$ to $p^!(\phi_*(c))$. Hence by Lemma \ref{lem:fill}, we see that $p^!(c)$ does not vanish in the $r$-neighborhood of $\iota(\ov{S})$ and $c$ is filling enlargeable.\\
If $\phi$ induces an isomorphism of fundamental groups, then, by the first part, we can replace $Y$ by a homotopy equivalent complex and hence we may assume that $\phi$ is an inclusion. Let $S\subset X$ be a finite $\pi_1$-surjective subcomplex carrying $c$. Then $S$ is also a subcomplex of $Y$ and it carries $\phi_*(c)$. Because $\phi$ induces an isomorphism on fundamental groups each connected cover of $X$ can be written as the restriction of a connected cover of $Y$. This shows that $\phi_*(c)$ is filling enlargeable.
\end{proof}

As a corollary we get homological invariance of filling enlargeability. Notice that, by the above, the filling enlargeable classes form a well defined subset in the group homology $H_*(\Gamma;\Q)=H_*(B\Gamma;\Q)$ of a finitely generated group $\Gamma$, since a homotopy equivalence between two different simplicial models of $B\Gamma$ identifies the subsets of filling enlargeable classes.

\begin{Corollary} \label{cor:hominv}
Let $M$ be a closed oriented manifold of dimension $n$. Then $M$ is filling enlargeable if and only if $\phi_*[M]\in H_n(B\pi_1(M);\Q)$ is filling enlargeable.\qed
\end{Corollary}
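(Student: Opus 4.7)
The proof will be a direct combination of two pieces developed earlier in Section~4: the remark immediately following Definition~\ref{def:fillen} that identifies the two notions of filling enlargeability on a closed oriented manifold, together with the functoriality statement of Proposition~\ref{prop:funct}. The plan is therefore very short and modular.

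First I would invoke the remark after Definition~\ref{def:fillen}: take $S=M$ as a finite connected $\pi_1$-surjective subcomplex of $M$ carrying $[M]$. This is legitimate because, after fixing a smooth triangulation, $M$ is itself a finite connected simplicial complex that tautologically carries its own fundamental class and has $\pi_1(S)=\pi_1(M)$. With this choice Definition~\ref{def:fillen} specializes, for the class $c=[M]$, exactly to Definition~\ref{def:fill}, since a connected cover of $X=M$ is a connected Riemannian cover $\ov{M}_r$ and the non-vanishing of the transfer $p^!([M])$ in an $r$-neighborhood of $\iota(\ov{M}_r)$ inside $L^\infty(\ov{M}_r)_b$ is precisely the condition $\fil(\ov{M}_r,\ov{g})\geq r$. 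Hence $M$ is filling enlargeable in the sense of Definition~\ref{def:fill} if and only if $[M]\in H_n(M;\Q)$ is filling enlargeable in the sense of Definition~\ref{def:fillen}.

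Next I would apply Proposition~\ref{prop:funct} to the classifying map $\phi\colon M\to B\pi_1(M)$, which by construction is an isomorphism on fundamental groups. Both implications of Proposition~\ref{prop:funct} are then available and together give that $[M]\in H_n(M;\Q)$ is filling enlargeable if and only if $\phi_*[M]\in H_n(B\pi_1(M);\Q)$ is filling enlargeable. Chaining this equivalence with the previous one yields the corollary.

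There is essentially no obstacle: the substantive work has already been carried out in establishing that filling enlargeability descends to a well-defined subset of $H_*(B\Gamma;\Q)$ independent of the chosen simplicial model of $B\Gamma$ (addressed in the paragraph preceding the corollary) and in Proposition~\ref{prop:funct}. The only mild point of care is that Definition~\ref{def:fillen} is phrased for simplicial complexes with finitely generated fundamental group, so one silently replaces $B\pi_1(M)$ by a homotopy equivalent simplicial model with finitely generated $\pi_1$; this is the same routine replacement already invoked in the proof of Proposition~\ref{prop:funct}.
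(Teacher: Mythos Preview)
Your proposal is correct and matches the paper's approach exactly: the corollary is stated with a \qed{} and no explicit proof, precisely because it follows immediately from the remark after Definition~\ref{def:fillen} (identifying the two notions via $S=M$) together with both implications of Proposition~\ref{prop:funct} applied to the classifying map $\phi$, which induces an isomorphism on fundamental groups.
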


Our Theorem \ref{thm:3} follows directly from Corollary \ref{cor:hominv} and the next proposition (compare \cite{BH10}*{Theorem 3.6}).

\begin{Proposition} \label{prop:small}
Let $X$ be a connected simplicial complex with finitely generated fundamental group. Then the non filling enlargeable classes in $H_n(X;\Q)$ form a rational vector subspace.
\end{Proposition}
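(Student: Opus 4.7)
The plan is to show that the set of non filling enlargeable classes in $H_n(X;\Q)$ is closed under addition and rational scalar multiplication, hence forms a $\Q$-subspace. The crucial reformulation is the contrapositive of Definition \ref{def:fillen}: a class $c$ is \emph{not} filling enlargeable if and only if there exists some $r_0>0$ such that for \emph{every} connected cover $p:\ov{X}\to X$, the transferred class $p^!(c)\in H^{lf}_n(\ov{S};\Q)$ vanishes in the $r_0$-neighborhood of $\iota(\ov{S})$, where $S$ is any finite $\pi_1$-surjective subcomplex carrying $c$.

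I would handle scalar multiplication first. By $\Q$-linearity of the transfer homomorphism, $p^!(\lambda c)=\lambda\, p^!(c)$, and for $\lambda\in\Q\setminus\{0\}$, the class $\lambda\, p^!(c)$ vanishes in a given open neighborhood if and only if $p^!(c)$ does (this is where rational coefficients are essential). Hence filling enlargeability is invariant under multiplication by nonzero scalars, and the zero class is trivially non filling enlargeable since $p^!(0)=0$.

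For additive closure, suppose $c_1$ and $c_2$ are both non filling enlargeable, with witness radii $r_1,r_2>0$. I would first choose a common finite $\pi_1$-surjective subcomplex $S\subset X$ carrying both classes (e.g.\ the union of any two carrying subcomplexes together with a path connecting them, enlarged to remain $\pi_1$-surjective). By the independence of Definition \ref{def:fillen} from the choice of subcomplex established in the paragraphs preceding this proposition, the radii $r_i$ translate to witness radii (possibly up to a constant factor depending only on the chosen subcomplexes) over this common $S$. Setting $r:=\min(r_1,r_2)>0$, for every connected cover $p:\ov{X}\to X$ both $p^!(c_1)$ and $p^!(c_2)$ bound locally finite $(n+1)$-chains in the $r$-neighborhood of $\iota(\ov{S})$; adding these chains and using linearity $p^!(c_1+c_2)=p^!(c_1)+p^!(c_2)$ shows that $p^!(c_1+c_2)$ also vanishes there. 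Thus $c_1+c_2$ is non filling enlargeable with witness radius $r$.

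There is no serious obstacle here: non filling enlargeable classes form a \emph{subspace} (rather than just a union of subspaces) precisely because the definition involves a \emph{uniform} (over all covers) witness radius, which interacts well with minima and finite linear combinations, combined with the fact that $\Q$-coefficients make scaling transparent. The only bookkeeping is confirming that the witness radius for a class can be transferred between different carrying subcomplexes with at most a multiplicative distortion, which is precisely the content of the well-definedness discussion (including Lemma \ref{lem:attach}) in the paragraphs preceding the proposition.
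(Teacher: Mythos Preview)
Your proof is correct and covers the same three points as the paper (zero class, scalar multiples, closure under addition). The treatment of the zero class and of scalar multiples is identical.

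For additive closure, your route differs from the paper's. You argue directly: translate the negation of Definition~\ref{def:fillen} into the existence of a witness radius $r_i$ for each $c_i$, transfer both witnesses to a common subcomplex $S$ (this is legitimate, and in fact the direction you need---from a smaller carrying subcomplex to a larger one---requires no distortion at all, by the $1$-Lipschitz argument in the well-definedness discussion), then take $r=\min(r_1,r_2)$ and add the bounding chains. The paper instead argues by contradiction: assuming $c+d$ is filling enlargeable, for each integer $k$ there is a cover in which $p^!(c)+p^!(d)$ survives in the $k$-neighborhood, so one of the two summands survives; a pigeonhole argument over $k\in\mathbb{N}$ then forces one of $c,d$ to be filling enlargeable. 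Both arguments rest on the same linearity $p^!(c_1+c_2)=p^!(c_1)+p^!(c_2)$ and on the independence from the choice of $S$; your direct version is slightly cleaner because it avoids the pigeonhole bookkeeping, while the paper's version has the minor advantage that it never explicitly manipulates the witness radii.
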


\begin{proof}
The class $0\in H_n(X;\Q)$ is not filling enlargeable. This follows directly from Definition \ref{def:fillen} (every finite $\pi_1$-surjective subcomplex $S$ carries $0$ but of course the 0-class vanishes in any neighbourhood of $\iota(\ov{S})$ for all $\ov{S}\rightarrow S$). Furthermore if a class is not filling enlargeable then clearly no rational multiple of it can be filling enlargeable.\\
Finally we need to show that the subset of non filling enlargeable classes is closed under addition. Let $c,d\in H_n(X;\Q)$ be non filling enlargeable and assume that $c+d$ is filling enlargeable. Then by definition there is a finite $\pi_1$-surjective subcomplex $S$ carrying $c+d$ such that for every $r>0$ there is a connected cover $\ov{X}\rightarrow X$ such that $p^!(c+d)=p^!(c)+p^!(d)$ does not vanish in the $r$-neighborhood of $\iota(\ov{S})$. But this implies that either $p^!(c)$ or $p^!(d)$ does not vanish in the $r$ neighborhood of $\ov{S}$. If we consider all natural numbers $k\geq 1$ for values of $r$ then for infinitely many $k$ either $p^!(c)$ or $p^!(d)$ does not vanish in the $k$-neighborhood of $\ov{S}$. Since $S$ carries both $c$ and $d$ we conclude that either $c$ or $d$ is filling enlargeable.
\end{proof}

\begin{Bemerkung}\label{rem:qess}
While we have already seen in Remark \ref{rem:zess} that an orientable closed width-enlargeable manifold $M^n$ is essential i.\ e. $\phi_*[M]\neq 0\in H_n(B\pi_1(M);\Z)$, it follows from Corollary \ref{cor:hominv} and Proposition \ref{prop:small} that if $M^n$ is also filling enlargeable, then it is even \emph{rationally essential} i.\ e. $\phi_*[M]\neq 0\in H_n(B\pi_1(M);\Q)$. This conclusion is strictly stronger since there are essential manifolds which are not rationally essential, for example $\R P^3$.
\end{Bemerkung}

\end{document}